\newtheorem{theorem}{Theorem}[section]
\newtheorem{teo}{Theorem}[section]
\newtheorem{lemma}[theorem]{Lemma}
\newtheorem{remark}[theorem]{Remark}
\newcommand{\calN}{ \mathcal{N}}
\renewcommand{\a}[2]{a\,(#1,#2)}
\newcommand{\ah}[2]{a_{h}\,(#1,#2)}
\newcommand{\Pp}{ \mathbb{P}}
\newcommand{\K}{K} 
\renewcommand{\O}{\Omega} 
\newcommand{\Eh}{{{\mathcal E}_h}} 
\newcommand{\Eho}{{{\mathcal E}^{o}_h}} 
\newcommand{\Ehb}{{{\mathcal E}^{\partial}_h}} 
\newcommand{\Th}{\mathcal{T}_h} 
\newcommand{\hK}{\h_{\K}} 
\newcommand{\h}{h} 
\newcommand{\bx}{{\bf x}}
\newcommand{\uh}{u_h} 
\newcommand{\vh}{v_h}
\newcommand{\ui}{u^{I}}
\newcommand{\n}{{\bf n}} 
\newcommand{\dx}{\,\mbox{d}x} 
\newcommand{\jump}[1]{\lbrack\!\lbrack\,#1\,\rbrack\!\rbrack} 
\newcommand{\re}{ \mathbb{R}}
\numberwithin{equation}{section}
\numberwithin{figure}{section}
\numberwithin{table}{section}
\renewcommand\lll{|\kern-1pt|\kern-1pt|} 
\newcommand{\vect}[1]{\boldsymbol{#1}}
\renewcommand{\lor }{\longrightarrow}
\newcommand{\triplenorm}[1]{%
  \left\vert\kern-0.9pt\left\vert\kern-0.9pt\left\vert #1
  \right\vert\kern-0.9pt\right\vert\kern-0.9pt\right\vert}
\newcommand{\hh}{h}
\newcommand{\kk}{k}
\newcommand{\MFD}{\textsc{MFD}}
\newcommand{\VEM}{\textsc{VEM}}
\newcommand{\ASSUM}[2]{\protect{(\textsf{#1}{#2})}}
\newcommand{\Vhk} {V_\hh^\kk}
\newcommand{\Vhkg}{V_{\hh,g}^\kk}
\newcommand{\dS}{\,ds}
\newcommand{\mK}{\abs{\K}}
\newcommand{\mE}{\abs{\E}}
\newcommand{\hE}{\hh_{\E}}
\newcommand{\E}{e}
\newcommand{\bxE}{\bx_{\E}}
\newcommand{\bxK}{\bx_{\K}}
\newcommand{\nE} {\n_{\E}}
\newcommand{\aK} [2]{a^{\K}(#1,#2)}
\newcommand{\aKh}[2]{a_{\hh}^{\K}(#1,#2)}
\newcommand{\SK} [2]{S^{\K}(#1,#2)}
\newcommand{\norm}[1]{\|#1\|}
\newcommand{\snorm}[1]{\abs{#1}}
\newcommand{\abs}[1]{|#1|}
\newcommand{\dual}[2]{\langle #1, #2\rangle}
\newcommand{\mm}{m}
\newcommand{\momE}[1]{\mu_{\E}^{#1}}
\newcommand{\momK}[1]{\mu_{\K}^{#1}}
\newcommand{\Hnc}{H^{1,\textrm{nc}}(\Th;\kk)}
\newcommand{\npoly}   {n_{\K,\kk}}
\newcommand{\sizeVhk} {N_{\K}}
\newcommand{\sizeVhkg}{N_{\Th}}
\newcommand{\NE}{N_{\text{edges}}}
\newcommand{\NF}{N_{\text{faces}}}
\newcommand{\NK}{N_{\text{elements }}}
\newcommand{\vI}{v^{I}}
\newcommand{\fh}{f_{\hh}}
\renewcommand{\wp}{w_{\pi}}
\newcommand{\PINK}{\Pi^{\nabla}_{\K}}
\newcommand{\bPI} {\vect{\Pi}}
\newcommand{\bPIp}{\vect{\Pi}^{\perp}}
\newcommand{\bPIN}{\bPI^{\nabla}}
\newcommand{\PAR}[1]{\medskip\noindent%
  \textbf{\emph{#1.}}
}
\newcommand{\matB}{\mathsf{B}}
\newcommand{\matD}{\mathsf{D}}
\newcommand{\matG}{\mathsf{G}}
\newcommand{\matI}{\mathsf{I}}
\newcommand{\matM}{\mathsf{M}}
\newcommand{\matS}{\mathsf{S}}
\newcommand{\matU}{\mathsf{U}}
\newcommand{\matGt}{\widetilde{\mathsf{G}}}
\newcommand{\calM}{ \mathcal{M}}
\title{The nonconforming virtual element method}
\author[B. Ayuso de Dios]{Blanca Ayuso de Dios}
\address{Center for Uncertainty Quantification in Computational Science \& Engineering\\
Computer,~Electrical and Mathematical Sciences \& Engineering Division (CEMSE) \\
King Abdullah University of Science and Technology\\
Thuwal 23955-6900, Kingdom of Saudi Arabia}
\author[K. Lipnikov]{K. Lipnikov}
\address{Applied Mathematics and Plasma Physics Group, Theoretical Division,
  Los Alamos National Laboratory, Los Alamos, NM 87545, USA;}
\author[G. Manzini]{Gianmarco Manzini}
\address{Applied Mathematics and Plasma Physics Group, Theoretical Division,
  Los Alamos National Laboratory, Los Alamos, NM 87545, USA;  Istituto di Matematica Applicata e Tecnologie
  Informatiche (IMATI) -- CNR, via Ferrata 1, I -- 27100 Pavia, Italy.}
\begin{document}

\maketitle

\begin{abstract}
  We introduce the {\it nonconforming} Virtual Element Method (\textsc{VEM}) for
  the  approximation of  second order elliptic problems. 
  We present the construction of the new element in two and three dimensions, highlighting the main
  differences with  the conforming \textsc{VEM}  and the classical 
  nonconforming finite element methods.
  We provide the error analysis and establish the equivalence with a family of mimetic finite difference methods.
\end{abstract}



\section{Introduction}\label{sec:0}

Methods that can handle general meshes consisting of arbitrary polygons 
or polyhedra have received significant attention over the last years.
Among them  the mimetic finite difference (MFD) method that has been  
successfully applied to a wide
range of scientific and engineering applications (see, for 
instance,~{\cite{Cangiani-Manzini-Russo:2009, Lipnikov-Manzini-Brezzi-Buffa:2011, Palha-Rebelo-Hiemstra-Kreeft-Gerritsma:2014-JCP, Lipnikov-Manzini-Shashkov:2014-JCP}} and the references therein).
However, the construction of high-order MFD schemes is still a challenging 
task even for two and three dimensional second order elliptic problems.
For example, the two-dimensional MFD scheme in \cite{BeiraodaVeiga-Lipnikov-Manzini:2011} could 
be seen as the high-order extension of the lower-order scheme
given in \cite{Brezzi-Buffa-Lipnikov:2009}. 
A straightworward extension of \cite{BeiraodaVeiga-Lipnikov-Manzini:2011} to three dimensions would 
lead to a clumsy discretization involving a huge number of degrees of freedom that ensure 
conformity of the approximation. 
By relaxing the conformity condition, a simpler MDF scheme
has been proposed in \cite{Lipnikov-Manzini:2014:JCP} for three dimensional elliptic problems. 
 
Very recently, in the pioneering work \cite{volley}, the basic principles of the 
{\it virtual element method} (VEM) have been introduced. 
The VEM allows one to recast the MFD
schemes \cite{Brezzi-Buffa-Lipnikov:2009,BeiraodaVeiga-Lipnikov-Manzini:2011}
as Galerkin formulations.  The virtual element methodology generalizes the classical finite element
method to mesh partitions consisting of polygonal and polyhedral elements 
of very general shapes including non-convex elements. In this
respect, it shares with the MFD method the flexibility of mesh
handling. Unlike the MFD method, the VEM provides a sound
mathematical framework that allows to devise and analyze new
schemes in a much simpler and elegant way.  
The name {\it virtual} 
comes from the fact that the local approximation space in each
mesh polygon or polyhedra contains a space of
polynomials together with some other functions that are solutions
of particular partial differential equations. Such functions are
never computed and similar to the MFD method, the VEM can be implemented
using only the degrees of freedom and the polynomial part of the approximation space. 
We refer to
\cite{BeiraodaVeiga-Brezzi-Marini-Russo:2014} for the implementation details.

Despite of its infancy, the conforming VEM laid in \cite{volley} has  
been already extended to a variety of two dimensional problems:
plate problems are studied in \cite{vem-plates}, linear elasticity 
in \cite{vem-elas},  mixed methods for $H(\mbox{div};\O)$-approximations 
are introduced in \cite{mixedVEM}, and very recently the VEM has been 
extended to simulations on discrete fracture networks \cite{berrone}.
In \cite{projettori}, further tools are presented that allow us to construct 
and analyze the conforming VEM for three dimensional elliptic problems.
The definition of the three dimensional virtual element spaces in  \cite{projettori}, requires the use of the two dimensional ones.  

In this paper, we develop and analyze the {\it nonconforming} VEM for the approximation 
of second order elliptic problems in two and three dimensions. 
We show that the proposed method contains the MFD schemes from \cite{Lipnikov-Manzini:2014:JCP}. 
In contrast to the conforming VEM, our construction is done simultaneously 
for any dimension and any approximation order.
To put this work in perspective, we present below a brief (non exhaustive) overview of 
nonconforming finite element methods.
 
\subsection{Overview of nonconforming finite element methods} 
Nonconforming finite elements were first recognized as a {\it variational crime}; 
a term first coined by Strang in \cite{strang0,strang1}. 
In the case of second order elliptic problems, the approximation space has some
continuity built in it, but still discrete functions are not continuous. 
Still, that relaxed continuity (or crime) has proved its usefulness in many 
applications, mostly related to continuum mechanics,  
in particular, for fluid flow problems \cite{heyran, rana-turek} (for moderate 
Reynolds numbers) and elasticity \cite{falk91,ortner}. 

The construction, analysis and understanding of nonconforming elements have received 
much attention since their first introduction for second order elliptic problems. 
In two dimensions, the design of schemes of order of accuracy $k\geq 1$ was guided 
by the patch-test, which enforces continuity at $k$ Gauss-Legendre points on edge.
Due to different behavior of odd and even polynomials, the construction of schemes 
for odd and even $k$ is different, with the latter case demanding much more elaborated arguments. 
Furthermore, the shape of the elements (triangular or rectangular/quadrilateral) adds additional
complexity to the construction of nonconforming elements \cite{rana-turek} (the result of having an odd or even number of edges in the element leads to a different construction). 
For the Stokes problem with the Dirichlet boundary conditions, Crouzeix and Raviart  
proposed and analyzed the first order ($k=1$) nonconforming finite element approximation of 
the velocity field in \cite{cr-stokes}, which is now know as the Crouziex-Raviart element. 
The extension to degree $k=3$ was given in \cite{Crouzeix-Falk:1989}, while the construction 
for degree $k=2$ was introduced in \cite{Fortin-Soulie:1983}. 
In all cases, the inf-sup stable Stokes pair is formed by considering discontinuous 
approximation for the pressure of one degree lower.

Already in the 80's, an equivalence between mixed methods and a modified version
of nonconforming elements of odd degree has been established in \cite{arnold-franco,dona00} 
and exploited in the analysis and implementation of the methods.  
The author of \cite{comodi}, inspired by \cite{arnold-franco}, has studied 
the hybridization of the mixed Hellan-Herrmann-Johnson method (of any degree) for 
the approximation of a fourth order problem. 
As a byproduct of the analyzed postprocessing technique (that uses the gradient of the 
displacement which would play the role of the velocity field in the Stokes problem),
a construction of nonconforming elements of any degree $k$ is provided. 
Again, this construction distinguishes  between odd and even degrees. Although the details are for the fourth order problem, the strategy can be adapted to other elliptic problems.
For $k=1,3$, the nonconforming element coincides with 
the construction given in \cite{cr-stokes,Crouzeix-Falk:1989} for the Stokes problem. 
For even $k$, in addition to the moments of order $k-1$ on each edge, an extra degree of freedom is required to ensure unisolvence. In the case $k=2$ the resulting  nonconforming local  finite element space has the same dimension 
but is different to the one proposed in \cite{Fortin-Soulie:1983} where it is 
constructed by adding a nonconforming bubble to the second order conforming space.

Over the last years, further generalizations of the nonconforming elements have been 
still considered by several authors; always distinguishing between odd and even degrees. 
In  \cite{Stoyan-Baran:2006,Baran-Stoyan:2007} a construction similar to the one given in \cite{comodi} is considered for the Stokes problem. A rather different approach is considered in \cite{Matthies-Tobiska:2005}.
Finally, while the extension to three dimensions is simple for $k=1$, already for $k=2$, the construction 
of a nonconforming element becomes cumbersome \cite{Fortin:1985}.

\subsection{Main contributions}
In this paper, we extend the virtual element methodology by developing in {\it one-shot} 
(no special cases) a nonconforming approximation of any degree for any spatial dimension 
and any element shape.
For triangular meshes and $k=1,2$, the proposed nonconforming VEM has the same degrees 
of freedom as the related nonconforming finite element in \cite{comodi}. 
For quadrilaterals and $k=1$, the degrees of freedom are the same as that in \cite{rana-turek}.
The three main contributions of the present work are as follows.

{(i)} The nonconforming VEM is constructed for any
order of accuracy and for arbitrarily-shaped polygonal or/and polyhedral
elements. It also provides a simpler construction on simplicial meshes and quadrilateral meshes.
 
{(ii)} Unlike the conforming VEM \cite{proiettori}, the nonconforming VEM 
is introduced and analyzed at once for two and three dimensional problems. 
This simplifies substantially its analysis and practical implementation.

{(iii)} We prove optimal error estimates in the energy norm and (for $k\geq 2$) 
in the $L^{2}$-norm. The analysis of the new method is carried out using techniques 
already introduced in \cite{volley, vem-elas} and extending the results well known in the 
classical finite elements to the virtual approach.
As the byproduct of our analysis, we provide the theory for the MFD schemes 
in \cite{Lipnikov-Manzini:2014:JCP}.
 
To convey the main idea of our work in a better way and to keep the presentation simple, 
we consider the Poisson problem. However, all results apply (with minor changes) 
to more general second order problems with constant coefficients.

 The outline of the paper is as follows.
In Section~\ref{sec:1} we formulate the problem and
introduce the basic  setting.
In Section~\ref{sec:2} we introduce the nonconforming
VEM.
Section~\ref{sec:3} is devoted to the error analysis of
the nonconforming approximation.
In Section~\ref{sec:4} we establish the connection with
the nonconforming MFD method proposed
in~\cite{Lipnikov-Manzini:2014:JCP}.
In Section~\ref{sec:5} we offer some final remarks and
discuss the perspectives for future work and developments.

\section{Continuous problem and basic setting}
\label{sec:1}

In this section we present the basic setting and describe the continuous problem. To ease the presentation and give a clear view of the ideas we consider the Poisson problem the simple Poisson problem. However it is worth stressing that  all the results in the present paper extend straightforwardy to more general second order problems with piecewise constant coefficients. 

{\sc Notation:} Throughout the paper, we use the standard notation of Sobolev
spaces, cf.~\cite{Adams75}.
Moreover, for any integer $\ell\ge 0$ and a domain $D\in\re^{m}$ with
$m\leq d$, $d=2,3$, $\Pp^{\ell}(D)$ is the space of polynomials of
degree  at most $\ell$ defined on $D$. We also adopt the convention that $\Pp^{-1}(D)=\{0\}$.

\subsection{Continuous problem}
Let the domain $\Omega$ in $\re^{d}$ with $d=2,3$ be a bounded
open polytope with boundary $\partial\Omega$, e.g., a polygonal domain
with straight boundary edges for $d=2$ or a polyhedral domain with
flat boundary faces for $d=3$.
Let $f$ be in $L^2(\Omega)$ and consider the simplest model problem:
\begin{align}
  -\Delta u &= f\phantom{0} \quad\textrm{in~}\Omega,        \label{eq:mod00:A}\\
  u         &= g\phantom{f} \quad\textrm{on~}\partial\Omega.\label{eq:mod00:B}
\end{align}

Let $V_g=\{v\in H^1(\Omega)\,:\,v|_{\partial\Omega}=g\}$ and
$V=H^{1}_{0}(\Omega)$.
The variational formulation of problem
\eqref{eq:mod00:A}-\eqref{eq:mod00:B} reads as
\begin{equation}\label{eq:var00}
  \mbox{Find $u\in V_g$ such that:}\quad\a{u}{v}=\dual{f}{v}\qquad\forall v\in V,
\end{equation}
where the bilinear form $a:V\times V\to\re$ is given by
\begin{align}
  \a{u}{v}=\int_{\Omega}\nabla u\cdot\nabla v\dx \qquad\forall u, v\in V,
  \label{eq:exact:a}
\end{align}
and $\dual{\cdot}{\cdot}$ denotes the duality product between the
functional spaces $V'$ and $V$.
The bilinear form in~\eqref{eq:exact:a} is  continuous and coercive with respect to the $H^{1}_0$-seminorm (which is a norm in $V$ by Poincare inequality); therefore  Lax-Milgram theorem ensures the well posedness of the variational
problem and, therefore, the existence of a unique solution $u\in V$ to
\eqref{eq:var00}.


\subsection{Basic setting}
\label{subsec:basic:setting}
We describe now the basic assumptions of the mesh partitioning and introduce some further functional spaces.
 
Let $\{\Th\}_h$ be a family of decompositions $\O$ into elements, $\K$ and let $\Eh$ denote the skeleton of the partition, i.e., the set of edges/faces of $\Th$. By $\Eho$ and $\Ehb$  we will refer to the set of interior and boundary edges/faces, respectively.
Following \cite{volley,proiettori} we make the following assumptions on the family of partitions:\\

{\bf (A0)} {\bf Assumptions on the family of partitions $\{\Th\}_h$}:  we assume that there exists a positive $\varrho>0$ such that
\begin{itemize}
\item for every element $\K$ and for every edge/face $e \subset \partial\K$, we have:
  $h_e\geq \varrho h_{\K}$\;,
\item every element $\K$ is star-shaped with respect to all the points of a sphere of
radius $\ge \varrho h_{\K}$;
\item for $d=3$, every face $e\in \Eh$ is star-shaped with respect to all the points of a disk having
radius $\ge \varrho h_{e}$.
\end{itemize}
The maximum of the diameters of the elements $\K\in \Th$ will be denoted by $h$. For every $h>0$, the partition $\Th$ is made of a finite number of polygons or polyhedra.\\

We introduce the broken Sobolev space for any $s> 0$
\begin{equation*}
  H^{s}(\Th) = \prod_{\K\in\Th} H^{s}(\K)=\big\{\,v\in L^{2}(\Omega)\,\,:\,\,\,v|_{\K}\in H^{s}(\K)\,\big\}, \qquad s> 0,
\end{equation*}
and define the broken  $H^{s}$-norm
\begin{equation*}
\|v\|^{2}_{s,\Th}:= \sum_{\K\in \Th} \| v\|_{s,\K}^{2} \qquad
\forall\, v \in H^{s}(\Th)\;,
\end{equation*}
and for $s=1$  the broken  $H^{1}$-seminorm
\begin{equation}\label{eq:norm-broken}
|v|^{2}_{1,h}:= \sum_{\K\in \Th} \|\nabla v\|_{0,\K}^{2} \qquad
\forall\, v \in H^{1}(\Th)\;.
\end{equation}
Let $e\subset \partial\K^{+} \cap \partial\K^{-}$ be an edge/face in $\Eho$. For $v\in H^{1}(\Th)$, by $v^{\pm}$ we denote the trace of $v|_{\K^{\pm}}$ on $e$ taken from within the element $\K^{\pm}$ and by $n_{e}^{\pm}$ we denote the unit normal on $e$ in the outward direction with respect to $\K^{\pm}$. We  then define the jump operator as:
\begin{equation}\label{eq:def-jump}
\jump{v}:=v^{+}\n_e^{+} +v^{-}\n_e^{-} \quad \mbox{on   } e\in \Eho \quad\textrm{and}\quad  \jump{v}:=v\n_e \mbox{     on } e\in \Ehb\;,
\end{equation}
where on boundary edges/faces we have defined it as the normal component of the trace of $v$.

It is convenient to introduce a space (subspace of  $H^{1}(\Th)$) with some continuity built in. For an integer  $k\geq 1$, we define
\begin{equation}\label{eq:h1:02b}
  \Hnc = \left\{\,
    v\in H^{1}(\Th)\,:\,\int_{\E}\jump{v}\cdot\nE\,q\dS=0\,
    \,\,\forall\,q\in\Pp^{k-1}(\E),
    \,\,\forall\E\in\Eh
    \,\right\}.
\end{equation}

 Although for discontinuous functions $|\cdot|_{1,h}$ is only a semi-norm,  for $v\in V=H^{1}_0(\O)$ and  $v\in H^{1,\text{nc}}(\Th)$ it is indeed a norm. In fact,  a standard application of the results in \cite{Brenner03} shows that a Poincare inequality holds for functions in $H^{1,nc}(\Th)$ (already with $k=1$),  i.e, there exists a constant $C_{P}>0$ independent of $h$ such that
\begin{equation}\label{eq:poinc:02}
  \|v\|_{0,\Th}^{2} \leq C_{P} |v|_{1,h}^{2} \quad \forall\, v\in H^{1,\text{nc}}(\Th)\;.
\end{equation}
Therefore with a small abuse of notation we will refer to the broken semi-norm as a norm.

\begin{remark}
  The space $H^{1,\text{nc}}(\Th;1)$ (i.e., $k=1$), is the space with
  minimal required continuity to ensure that the analysis can be
  carried out.
\end{remark}

Finally, the bilinear form $a(\cdot,\cdot)$  can be split as:
\begin{equation}\label{eq:split0}
  \a{u}{v}=\sum_{\K\in\Th}\aK{u}{v}
  \quad\textrm{where}\quad
  \aK{u}{v}=\int_{\K}\nabla u\cdot\nabla v\dx
  \quad\forall u,v\in V.
\end{equation}

\section{Non-conforming virtual element method}
\label{sec:2}

In this section we introduce the nonconforming virtual finite element
method for the model problem \eqref{eq:mod00:A}-\eqref{eq:mod00:B}
which we will write as a Galerkin approximation:

\begin{equation}\label{eq:var0h}
  \mbox{Find $\uh\in\Vhkg$ such that:}\quad\ah{\uh}{\vh}=\dual{\fh}{\vh}\qquad\forall\vh\in\Vhk,
\end{equation}
where $\Vhk\subset\Hnc$ is the global nonconforming virtual space;
$\Vhkg$ is the affine space required by the numerical treatment of the
Dirichlet boundary conditions, $a_{\hh}$ and $\dual{\fh}{\cdot}$ are
the nonconforming approximation to the bilinear form $a$ and the
linear functional $\dual{f}{\cdot}$.\\

We start by describing the local and global nonconforming virtual finite element spaces (denoted by $V_h^{k}(\K)$ and $V_h^{k}$ respectively). We then construct the discrete bilinear form ($a_h(\cdot,\cdot)$)  and right hand side ($f_h$),  discussing also their main properties for the analysis of the resulting approximation.
Throught the whole section, we follow the basic ideas given in \cite{volley,proiettori}, trying to highlight the main differences in the present case.

\subsection{The local nonconforming virtual element space $\Vhk(\K)$}
We need to introduce some further notation. For a simple polygon or polyhedra $\K$ with $n$ edges/faces we denote by $x_{\K}$ its center of gravity, by $|\K|$ its $d$-dimensional measure (area for $d=2$, volume for $d=3$) and by $h_{\K}$ the diameter of the element $\K$. Similarly, for each edge/face $e\subset \partial\K$, we denote by $x_{e}$ the midpoint/barycenter of the edge/face, by $|e|$ its measure and by $h_e$ the diameter of the edge/face. As before, $\n_{\K}$ denotes the outward unit normal on $\partial\K$ and $\n_{e}$ refers to the outward unit normal on $e$.

We define for $k\geq 1$ the finite dimensional space $V^{k}_h(\K)$ associated to the polygon/polyhedra $\K$:

\begin{equation}\label{eq:def:Vhk0}
  \Vhk(\K)=\big\{\,
  v\in H^{1}(\K)\,:\,\frac{\partial v}{\partial \n}\in\Pp^{k-1}(\E)
  \,\,\,\forall\E\subset\partial\K,\,\,\,
  \Delta v\in\Pp^{k-2}(\K)\,
  \big\}\;,
\end{equation}
with the usual convention that $\mathbb{P}_{-1}(\K)=\{0\}$.\\

For $k=1$, $V^{1}_h(\K)$ consists of functions $v$ whose normal derivative $\frac{\partial v}{\partial\n}$ is constant on each $e\subset \partial\K$  (and different on each $e$) and inside $\K$ are harmonic (i.e., $\Delta\,v=0$). This characterization seems to give $n+1$ conditions, but a closer
look reveals that we are precisely imposing $n$ conditions. The reason is that a harmonic function in $V^{1}_h(\K)$ can be uniquely determined by adding the solvability condition $\int_{\partial\K}\frac{\partial v}{\partial \n} =0$   which gives $\sum_{e\in\partial\K} \int_{e}\frac{\partial v}{\partial \n} =0$ and reduces by $1$ the number of conditions, hence $n+1-1=n$.\\

For $k=2$, the space $V_{\hh}^{2}(\K)$ is made of functions for which
the normal derivative along the edges/faces $\E\in\partial\K$ is a
linear polynomial and inside $\K$,  $\Delta v$ is constant. A simple counting reveals then that the dimension of $V^{2}_{\hh}$ is $dn+1$.

For each polygon/polyhedra $\K$, the dimension of $V_h^{k}(\K)$ is given by
\begin{equation}\label{eq:Nk}
  \sizeVhk =
  \begin{cases}
    nk + (\kk-1)\kk/2                 & \mbox{for~} d=2,\\[0.25em]
    nk(\kk+1)/2 + (\kk-1)\kk(\kk+1)/6 & \mbox{for~} d=3.
  \end{cases}
\end{equation}

Let $s=(s_1,\ldots,s_d)$ be a $d$-dimensional multi-index with the
usual notation that $\abs{s}=\sum_{i=1}^{d}s_i$ and
$\bx^s=\prod_{i=1}^{d}x_i^{s_i}$ where $\bx=(x_1,\ldots,x_d)\in\re^d$.
For $\ell\geq 0$, the symbols $\calM^{\ell}(\E)$ and
$\calM^{\ell}(\K)$ respectively denote the set of \emph{scaled monomials}
on $\E$ and $\K$:
\begin{equation}
  \calM^{\ell}(\E)=\left\{\left(\frac{\bx-\bxE}{\hE}\right)^{s},\,\,\abs{s}\leq\ell\right\}
  \quad\textrm{and}\quad
  \calM^{\ell}(\K)=\left\{\left(\frac{\bx-\bxK}{\hK}\right)^{s},\,\,\abs{s}\leq\ell\right\}.
\end{equation}

\begin{figure}[!t]
  \centering
  \begin{tabular}{cccc}
    \includegraphics[width=0.2\textwidth]{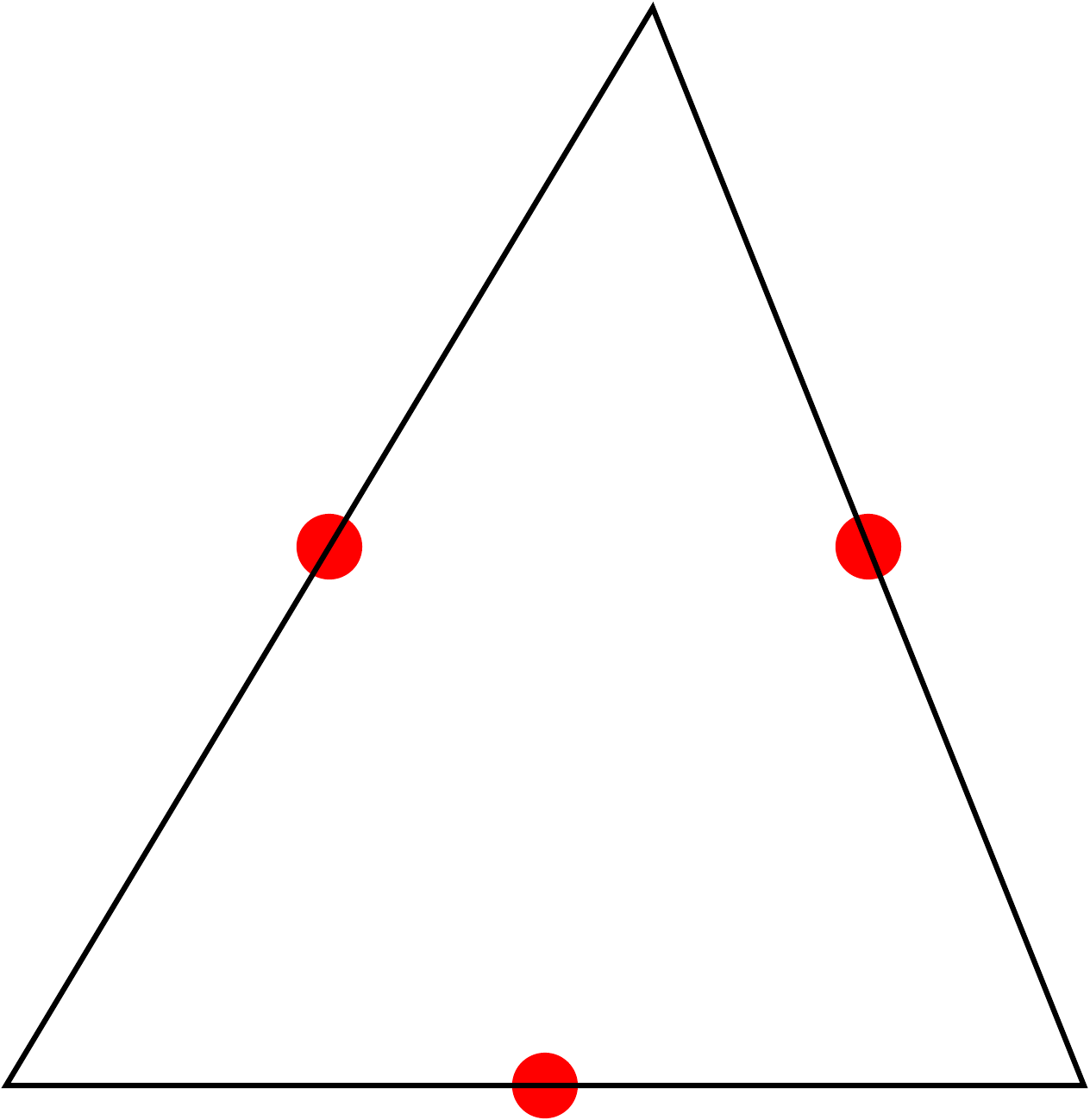} &\quad
    \includegraphics[width=0.2\textwidth]{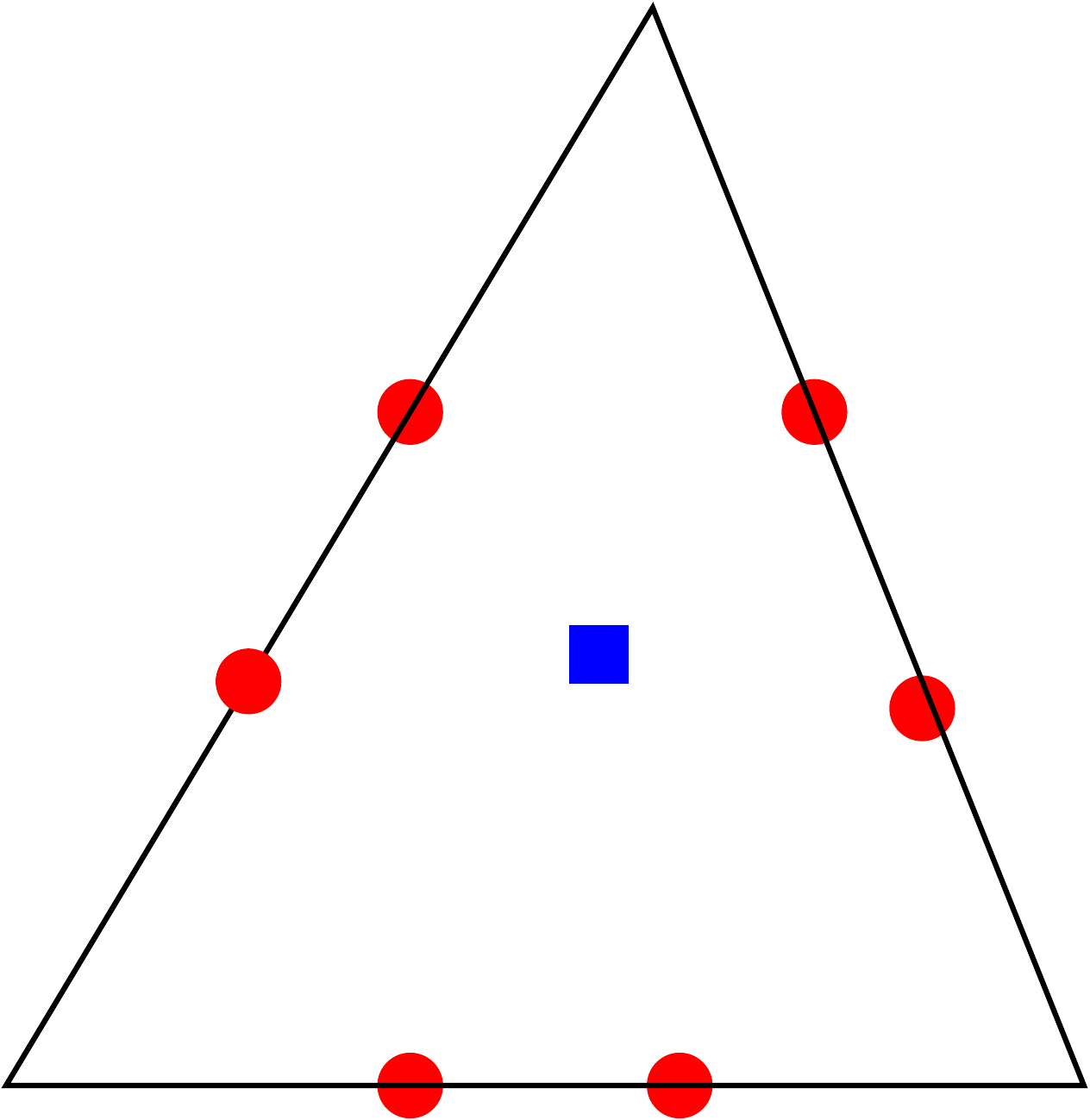} &\quad
    \includegraphics[width=0.2\textwidth]{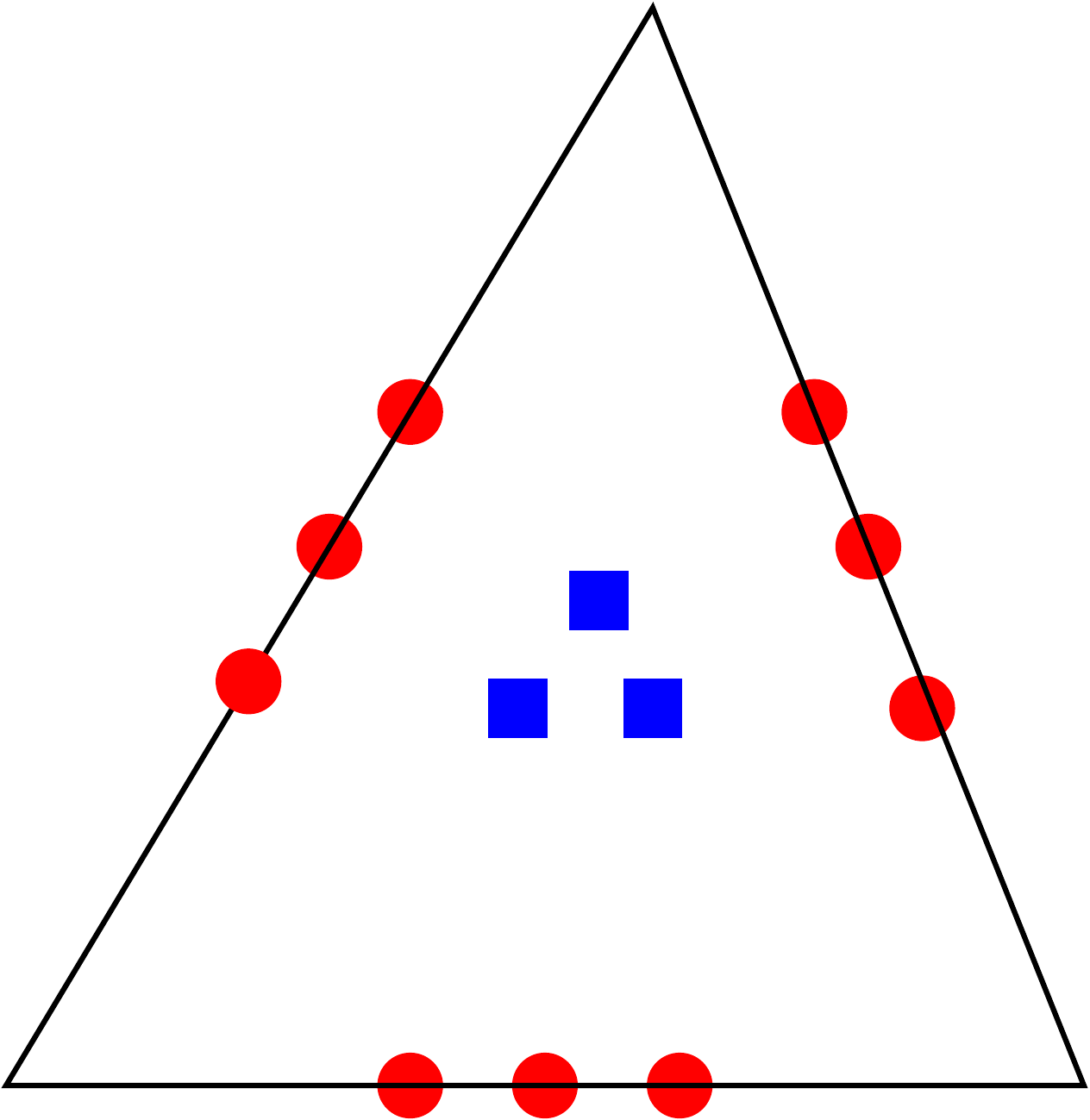} &\quad
    \includegraphics[width=0.2\textwidth]{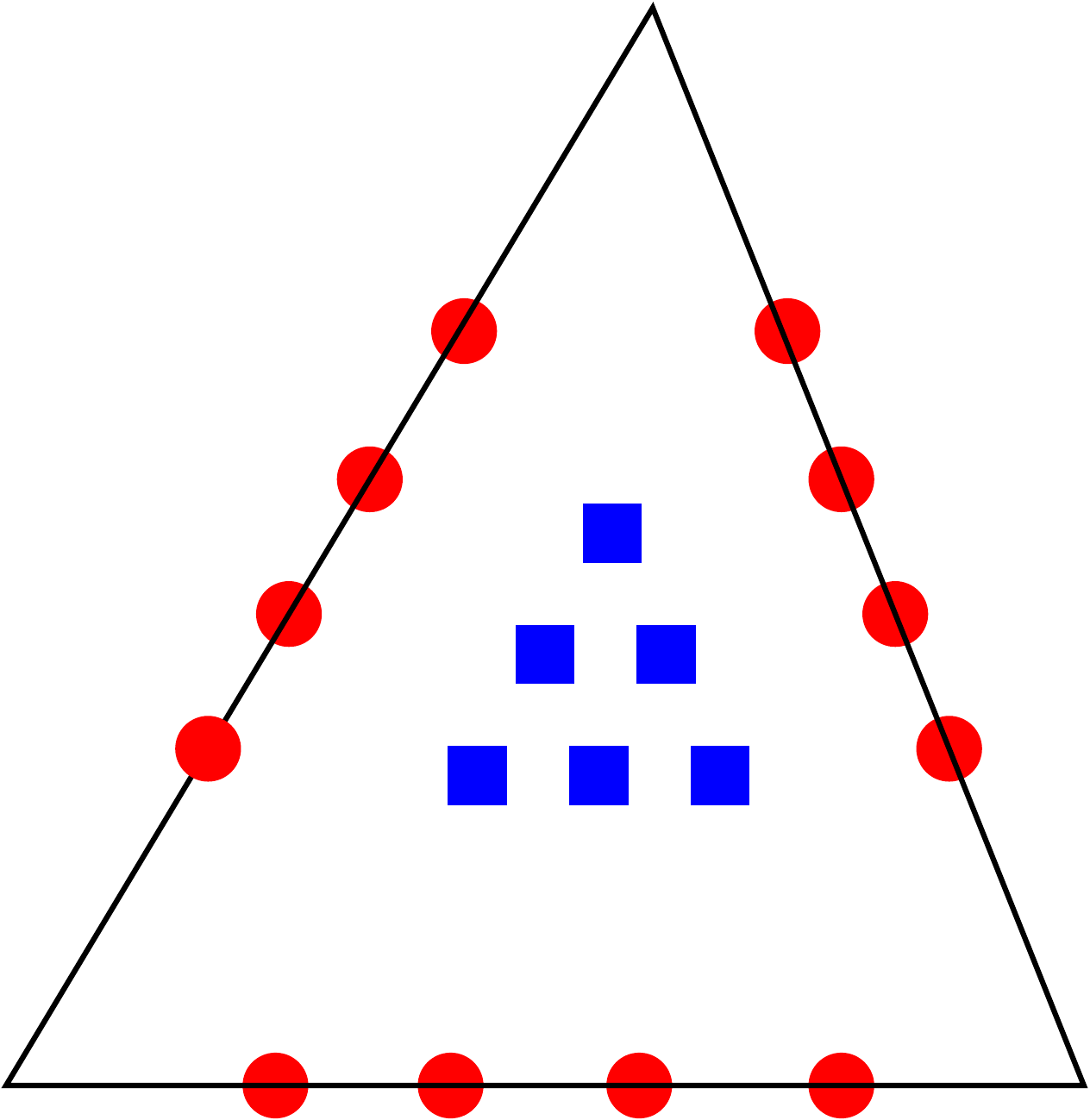} \\
    $\mathbf{k=1}$ & $\mathbf{k=2}$ & $\mathbf{k=3}$ & $\mathbf{k=4}$
  \end{tabular}
  \caption{Degrees of freedom of a triangular cell for $k=1,2,3,4$; edge
    moments are marked by a circle; cell moments are marked by a square.}
  \label{fig:dofs:triangle}
\end{figure}

\begin{figure}[!t]
  \centering
  \begin{tabular}{cccc}
    \includegraphics[width=0.2\textwidth]{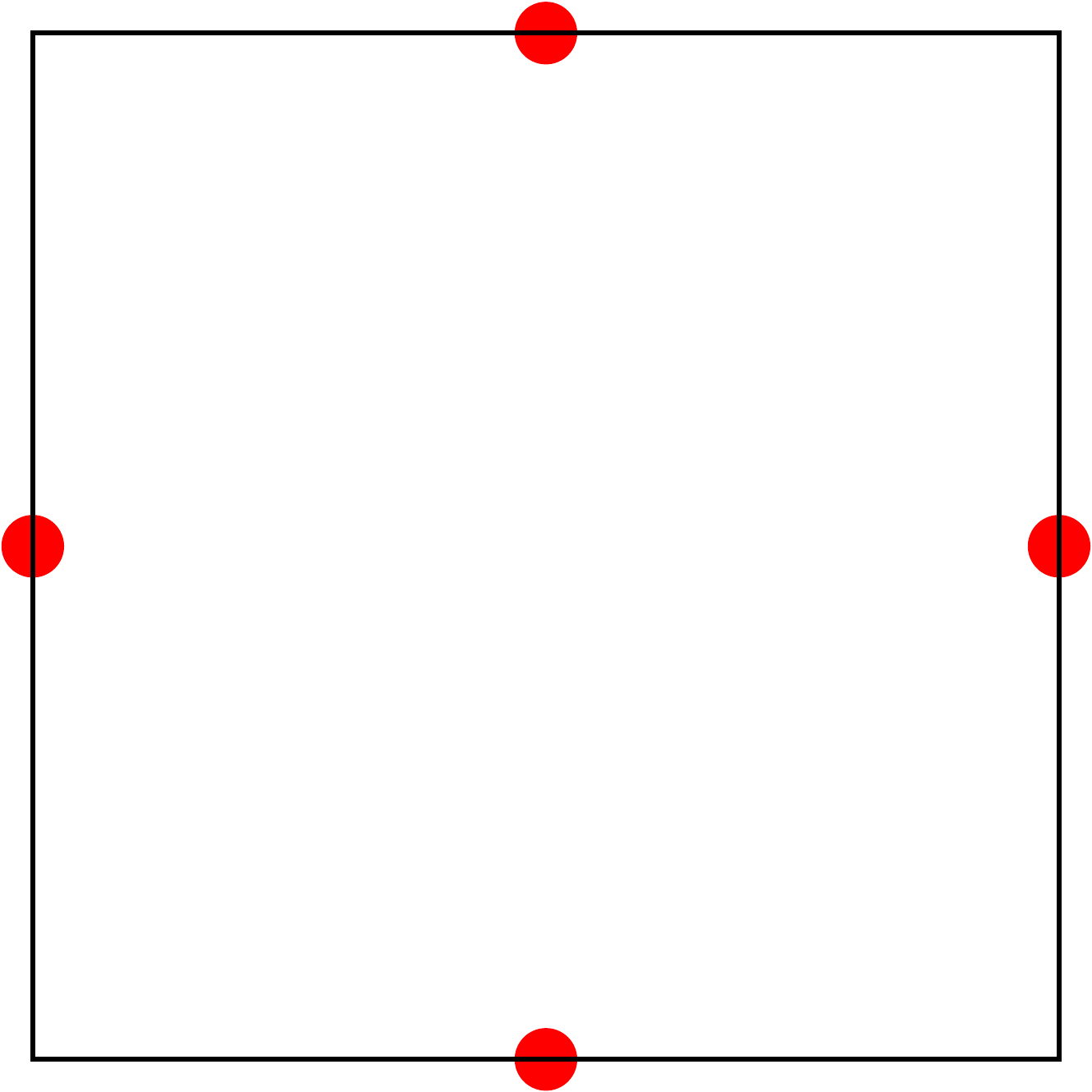} &\quad
    \includegraphics[width=0.2\textwidth]{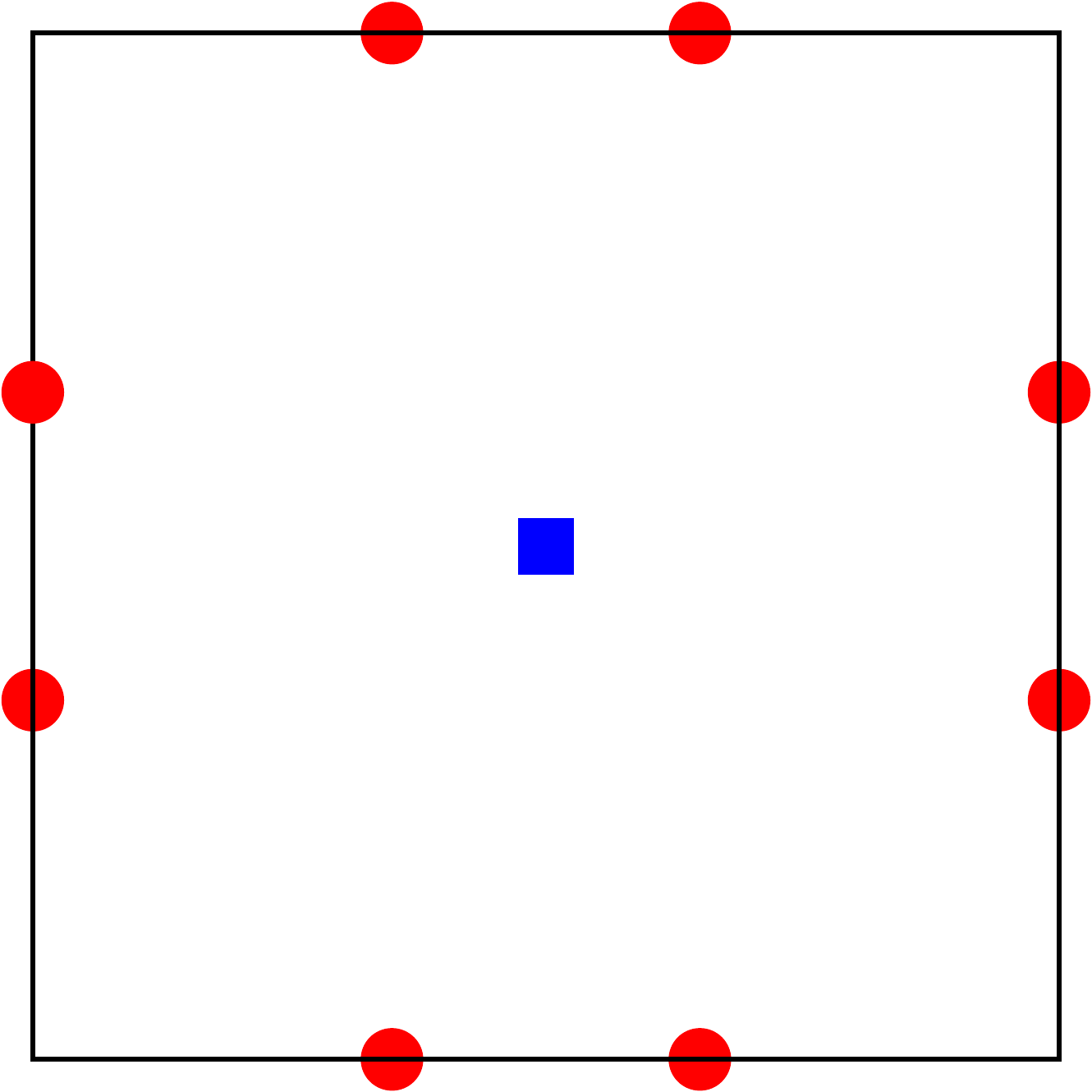} &\quad
    \includegraphics[width=0.2\textwidth]{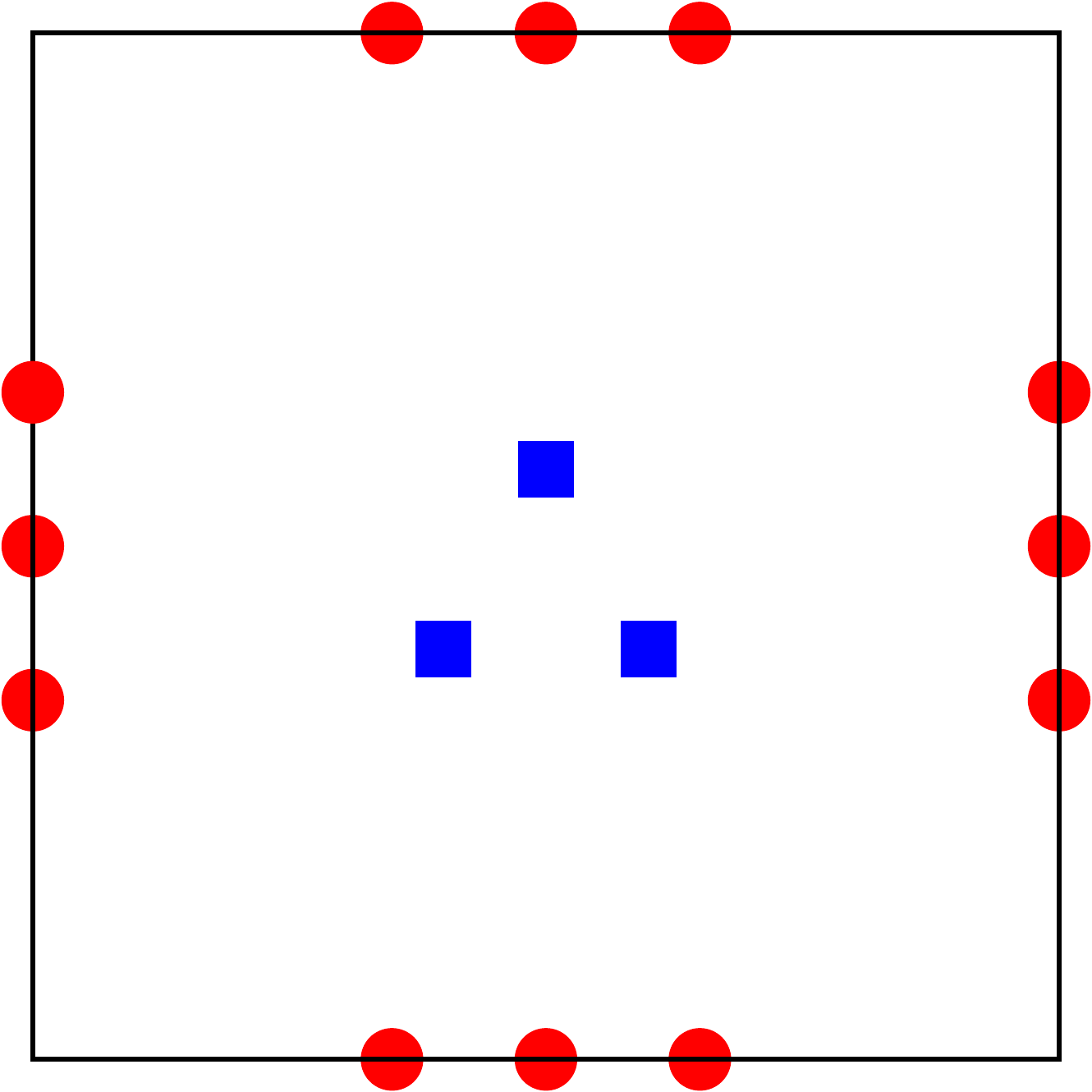} &\quad
    \includegraphics[width=0.2\textwidth]{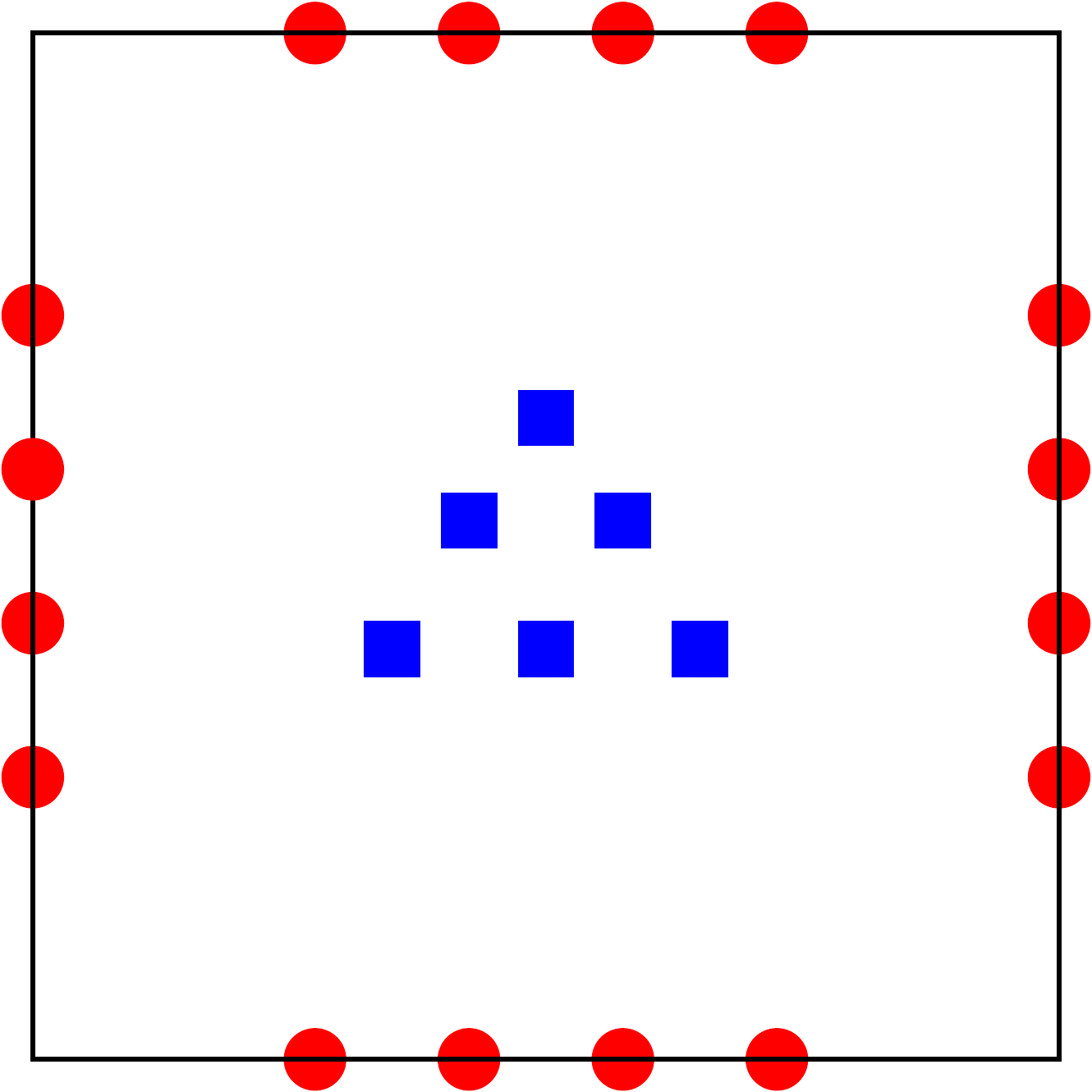} \\
    $\mathbf{k=1}$ & $\mathbf{k=2}$ & $\mathbf{k=3}$ & $\mathbf{k=4}$
  \end{tabular}
  \caption{Degrees of freedom of a quadrilateral cell for $k=1,2,3,4$; edge
    moments are marked by a circle; cell moments are marked by a square.}
  \label{fig:dofs:quadrilateral}
\end{figure}

\begin{figure}[!t]
  \centering
  \begin{tabular}{cccc}   
    \includegraphics[width=0.2\textwidth]{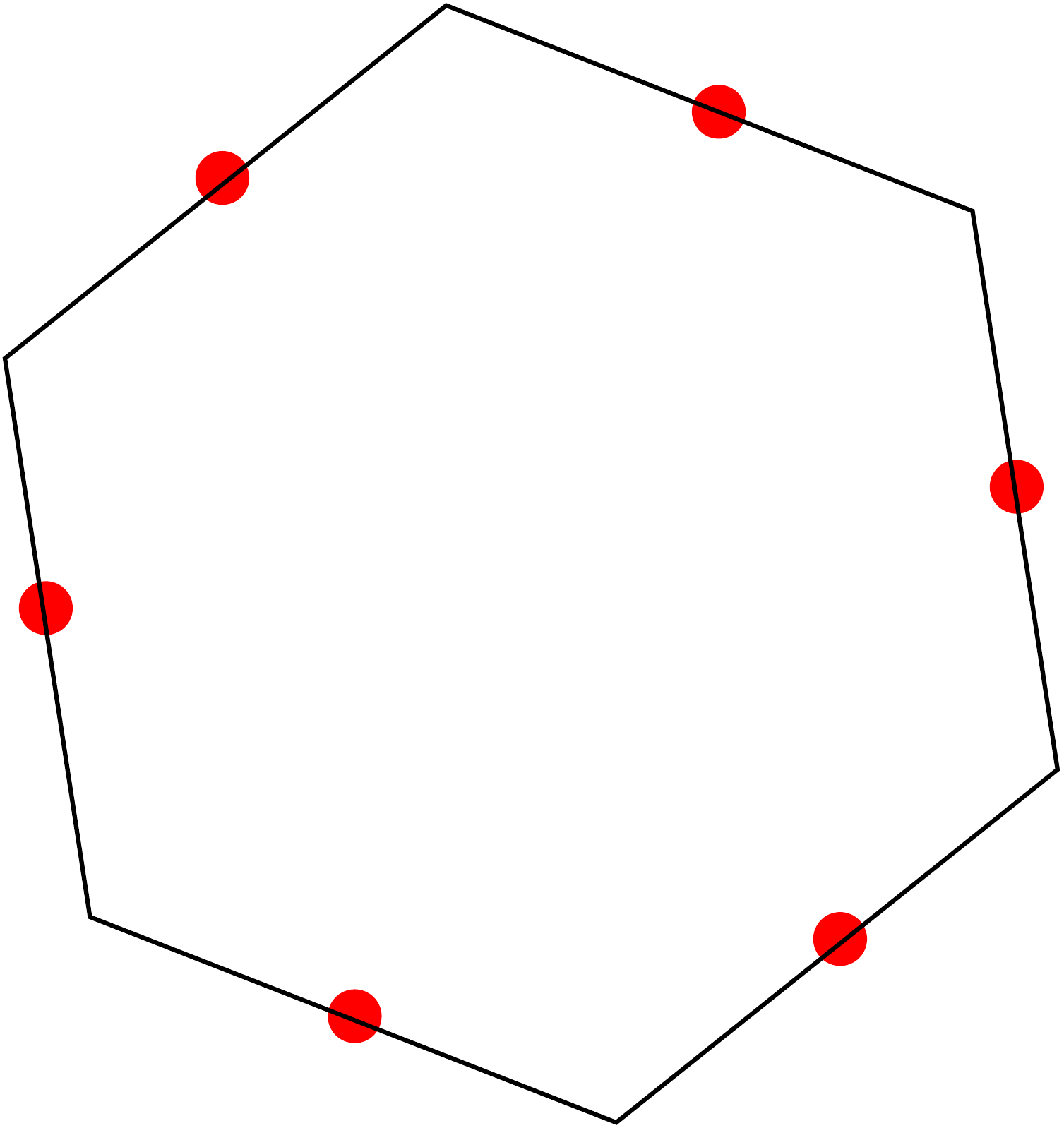} &\quad
    \includegraphics[width=0.2\textwidth]{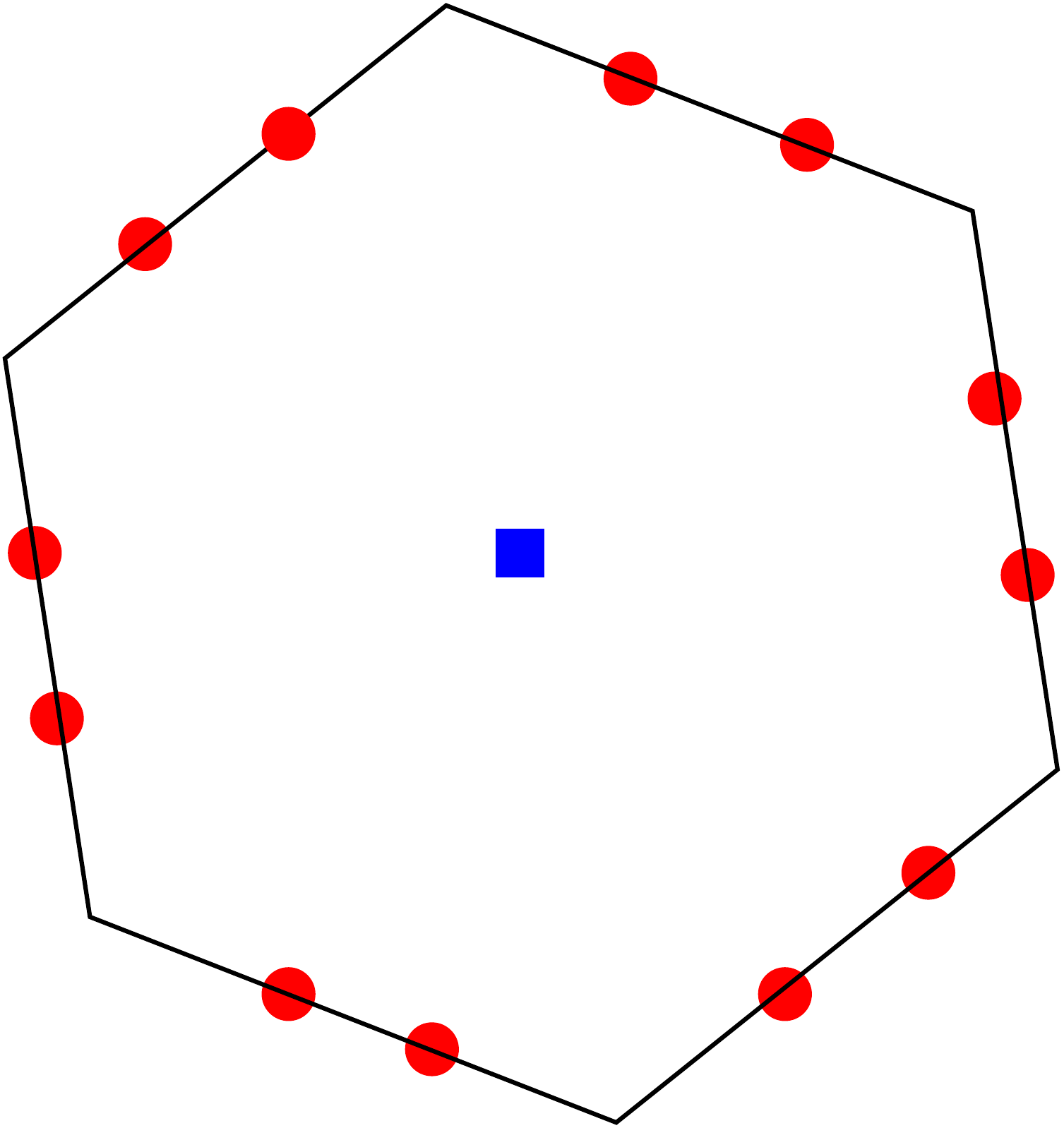} &\quad
    \includegraphics[width=0.2\textwidth]{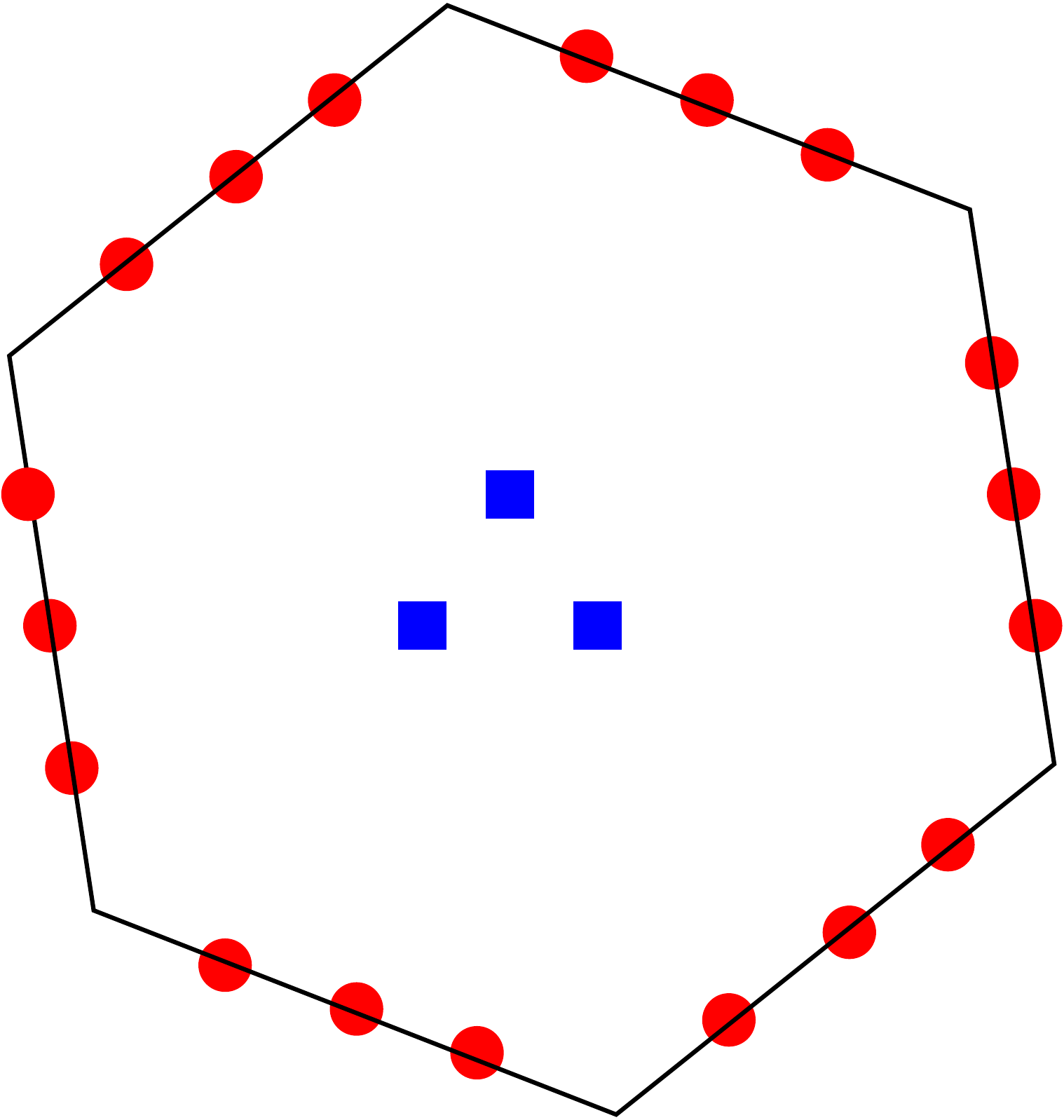} &\quad
    \includegraphics[width=0.2\textwidth]{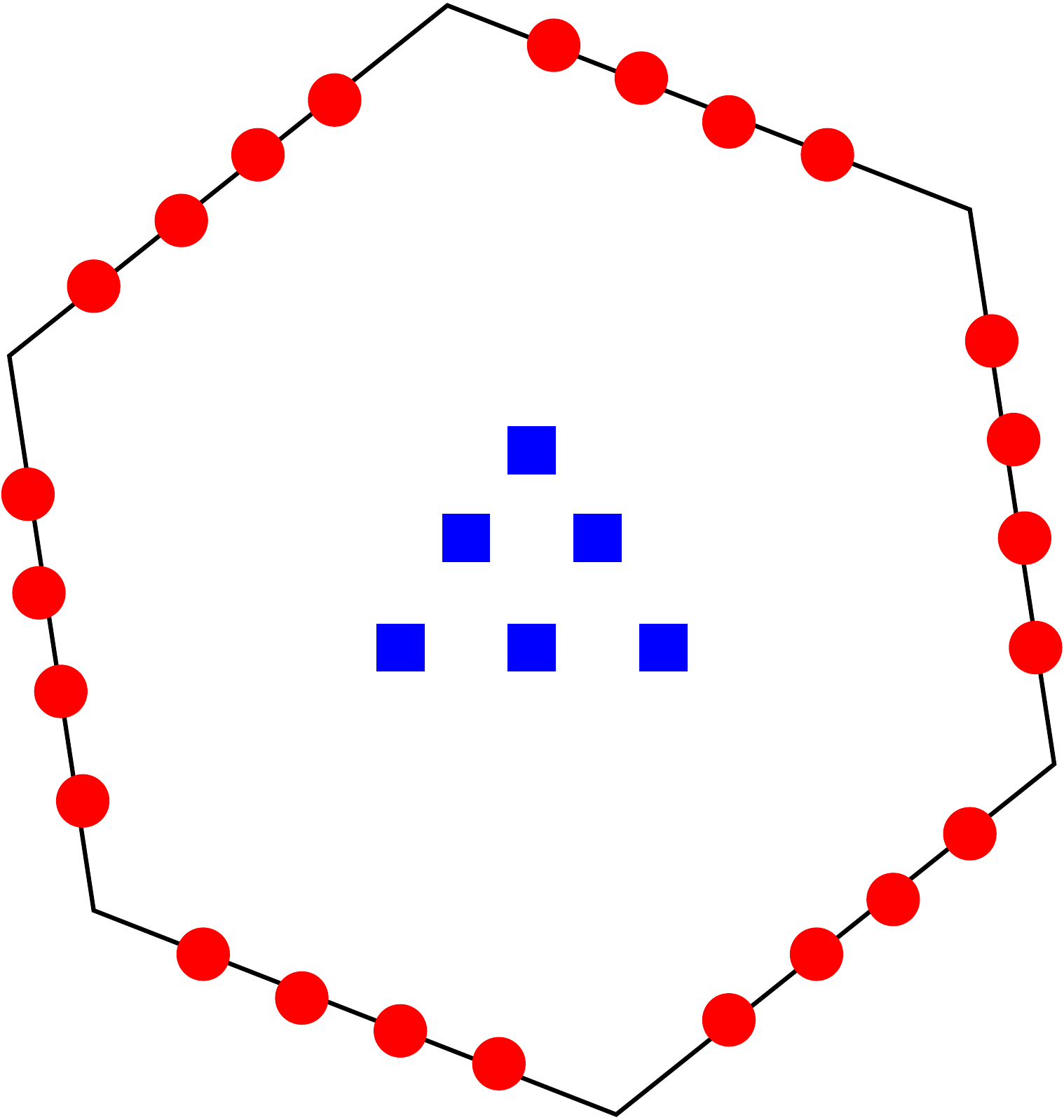} \\
    $\mathbf{k=1}$ & $\mathbf{k=2}$ & $\mathbf{k=3}$ & $\mathbf{k=4}$
  \end{tabular}
  \caption{Degrees of freedom of a hexagonal cell for $k=1,2,3,4$; edge
    moments are marked by a circle; cell moments are marked by a square.}
  \label{fig:dofs:hexagon}
\end{figure}

\begin{figure}[!t]
  \centering
  \begin{tabular}{cccc}   
    \includegraphics[width=0.2\textwidth]{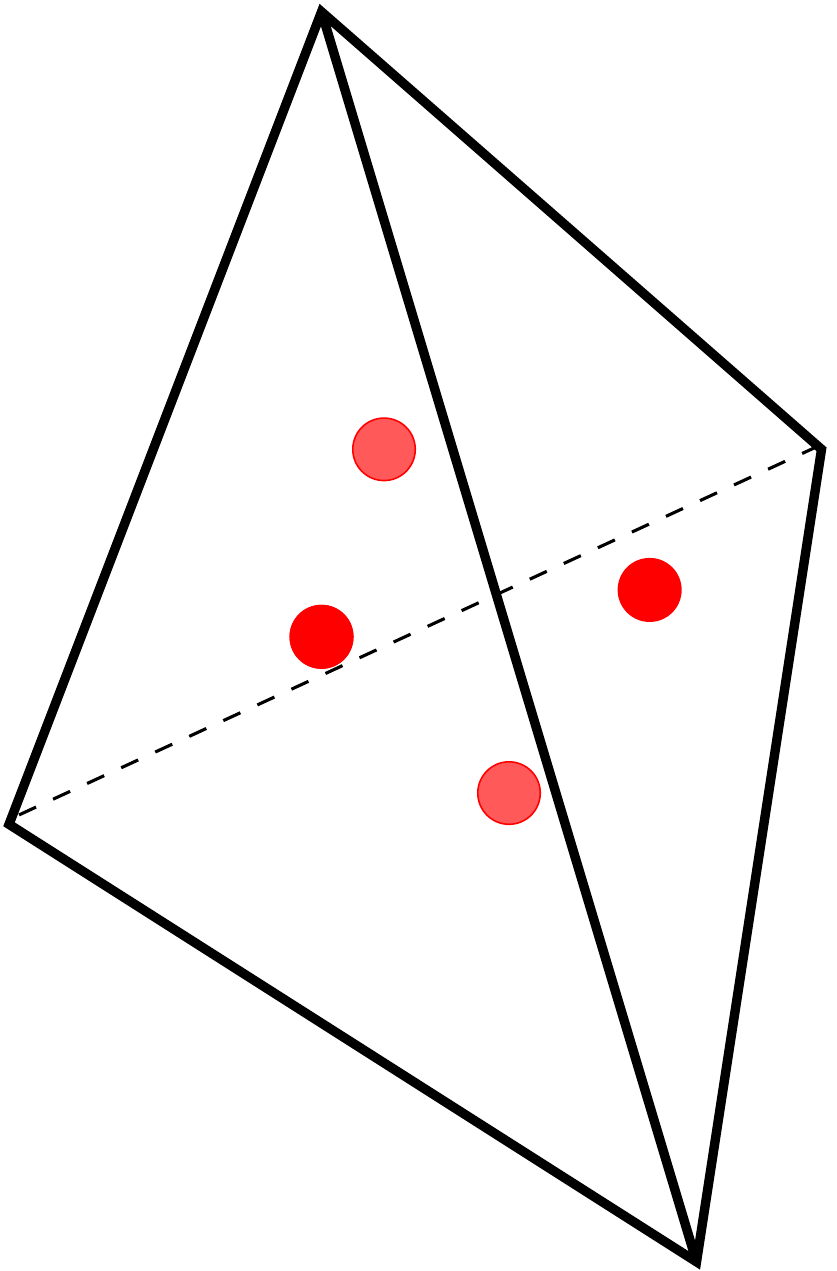} &\quad
    \includegraphics[width=0.2\textwidth]{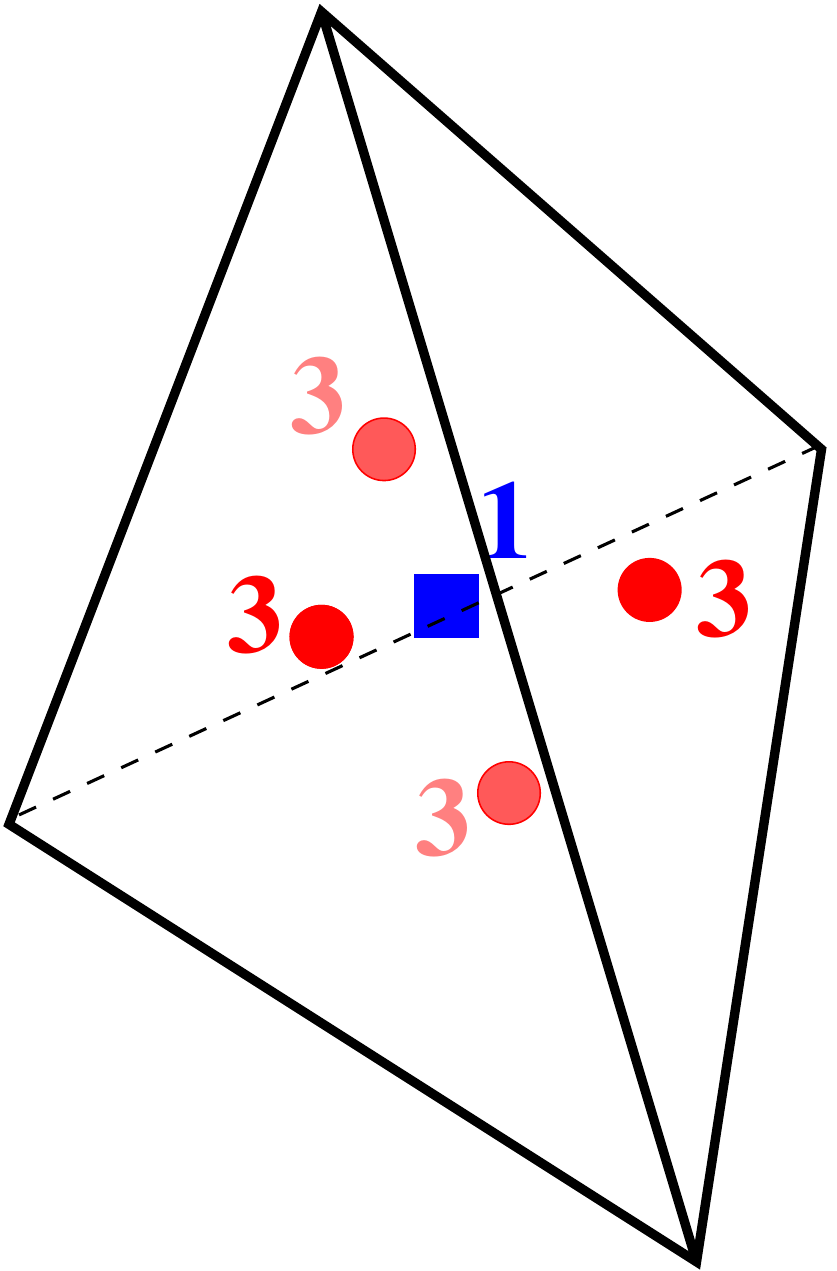} &\quad
    \includegraphics[width=0.2\textwidth]{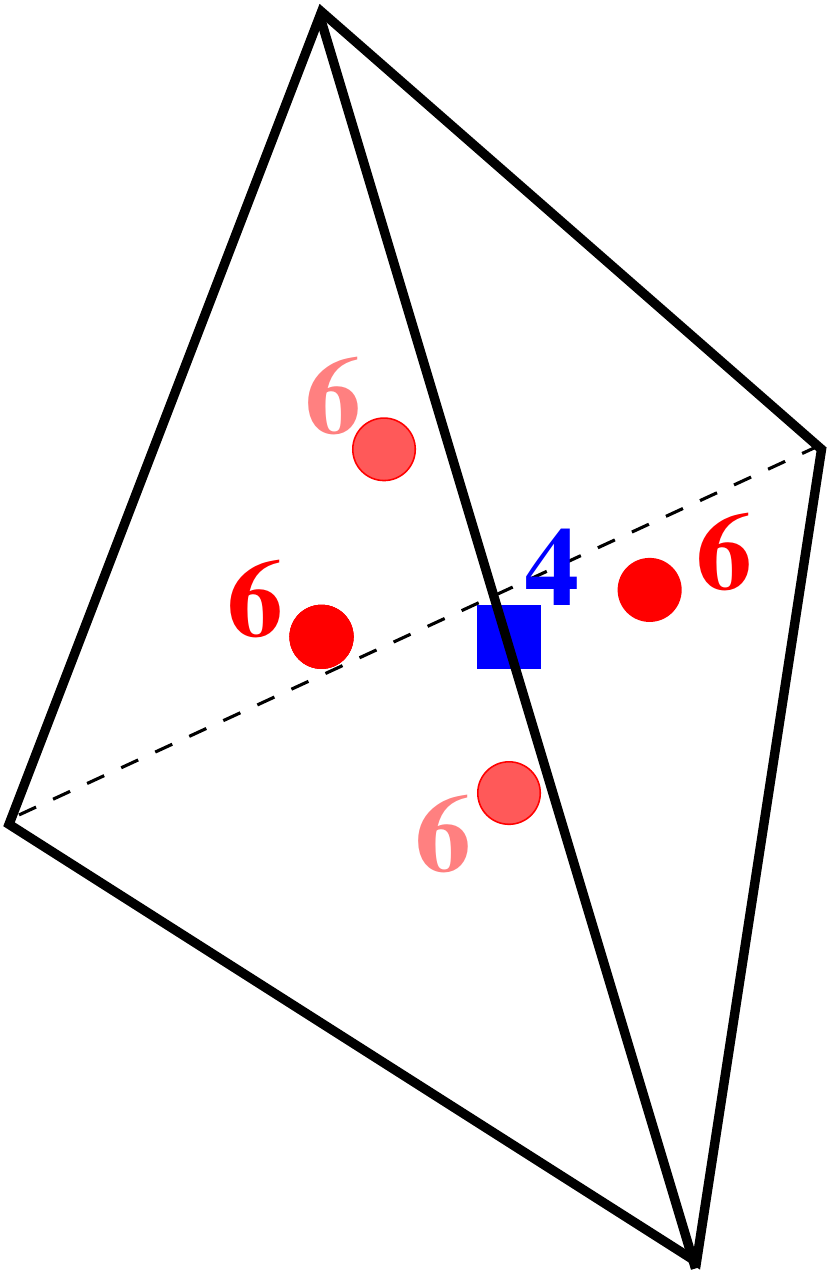} &\quad
    \includegraphics[width=0.2\textwidth]{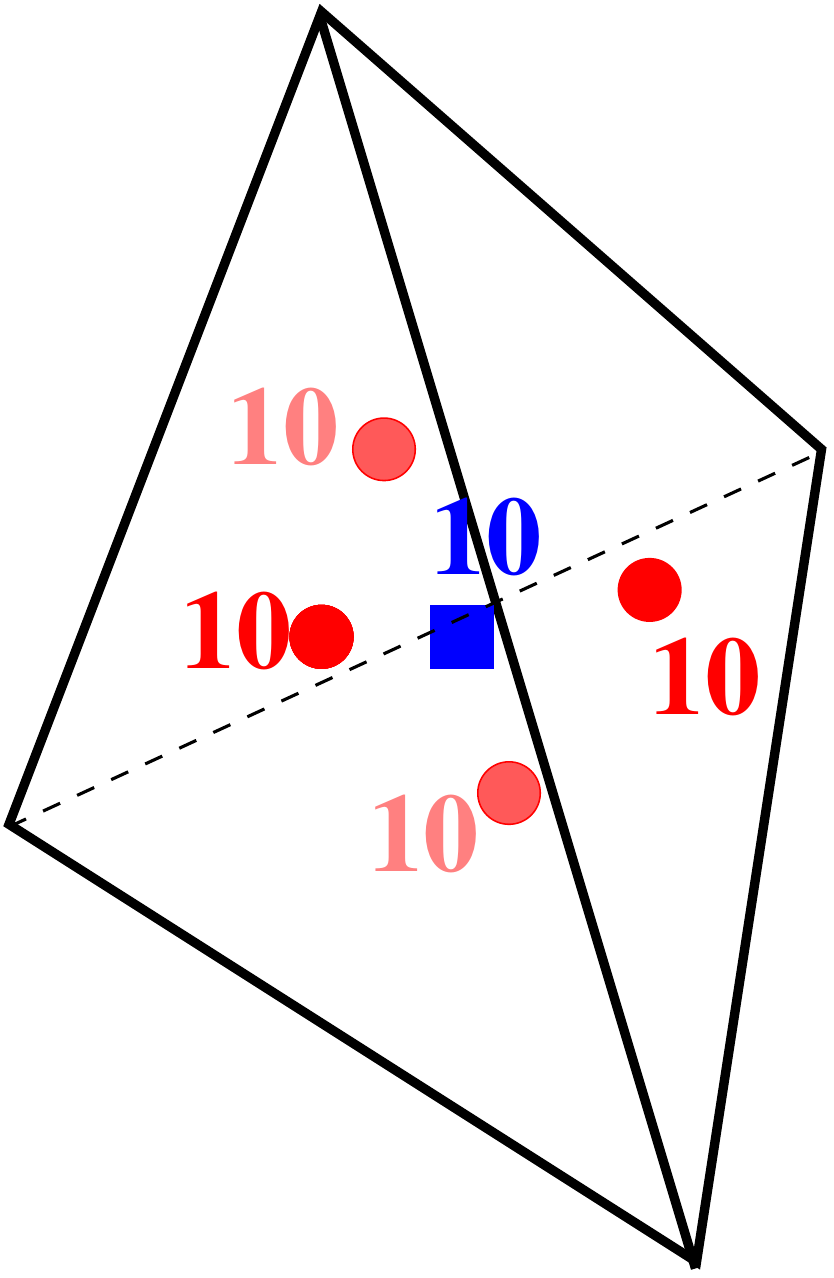} \\
    $\mathbf{k=1}$ & $\mathbf{k=2}$ & $\mathbf{k=3}$ & $\mathbf{k=4}$
  \end{tabular}
  \caption{Degrees of freedom of a tetrahedral cell for $k=1,2,3,4$; face
    moments are marked by a circle; cell moments are marked by a square.
    The numbers indicates the number of degrees of freedom ($1$ is not marked in the plot
    for $k=1$).}
  \label{fig:dofs:tetbis}
\end{figure}

\begin{figure}[!t]
  \centering
  \begin{tabular}{cccc}   
    \includegraphics[width=0.2\textwidth]{./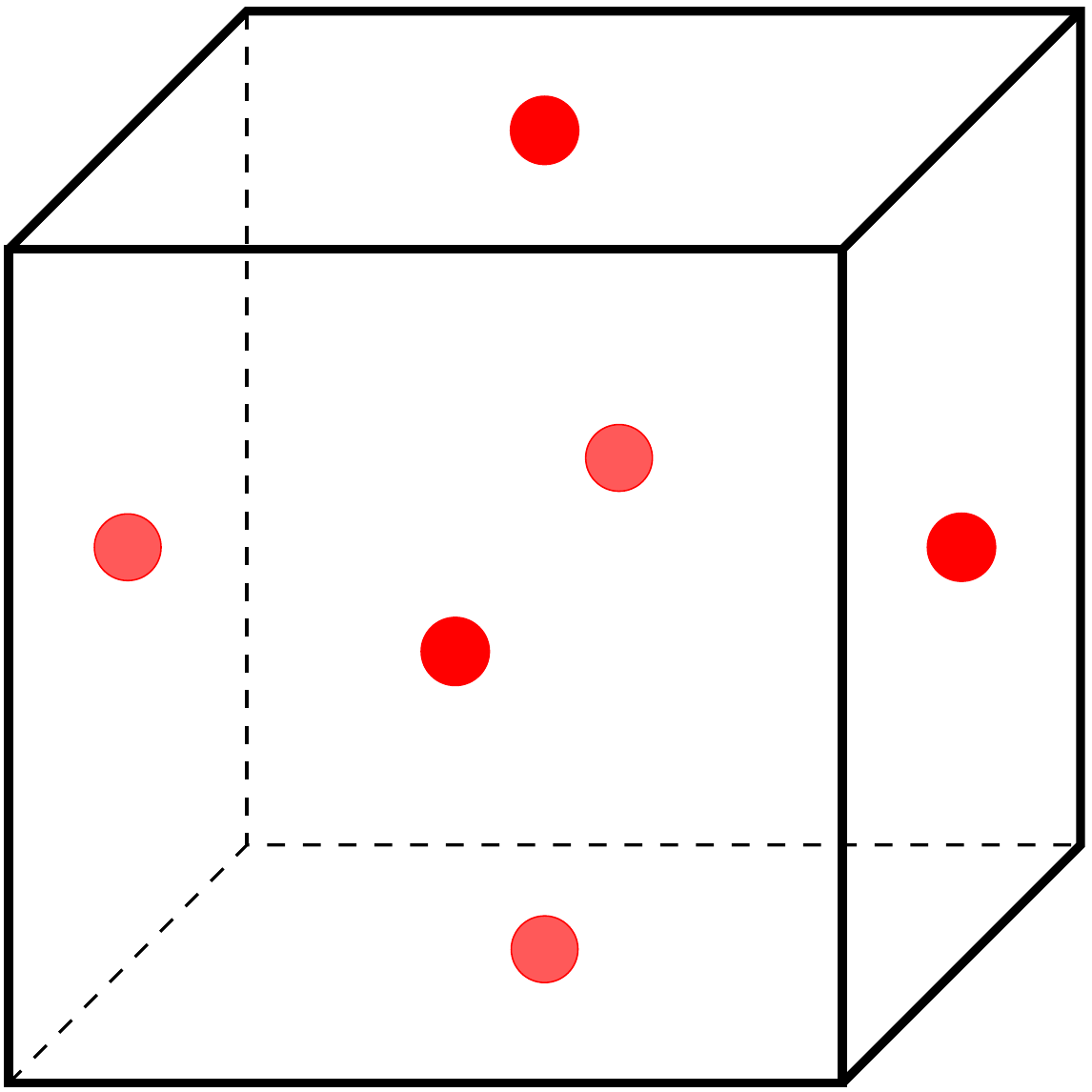} &\quad
    \includegraphics[width=0.2\textwidth]{./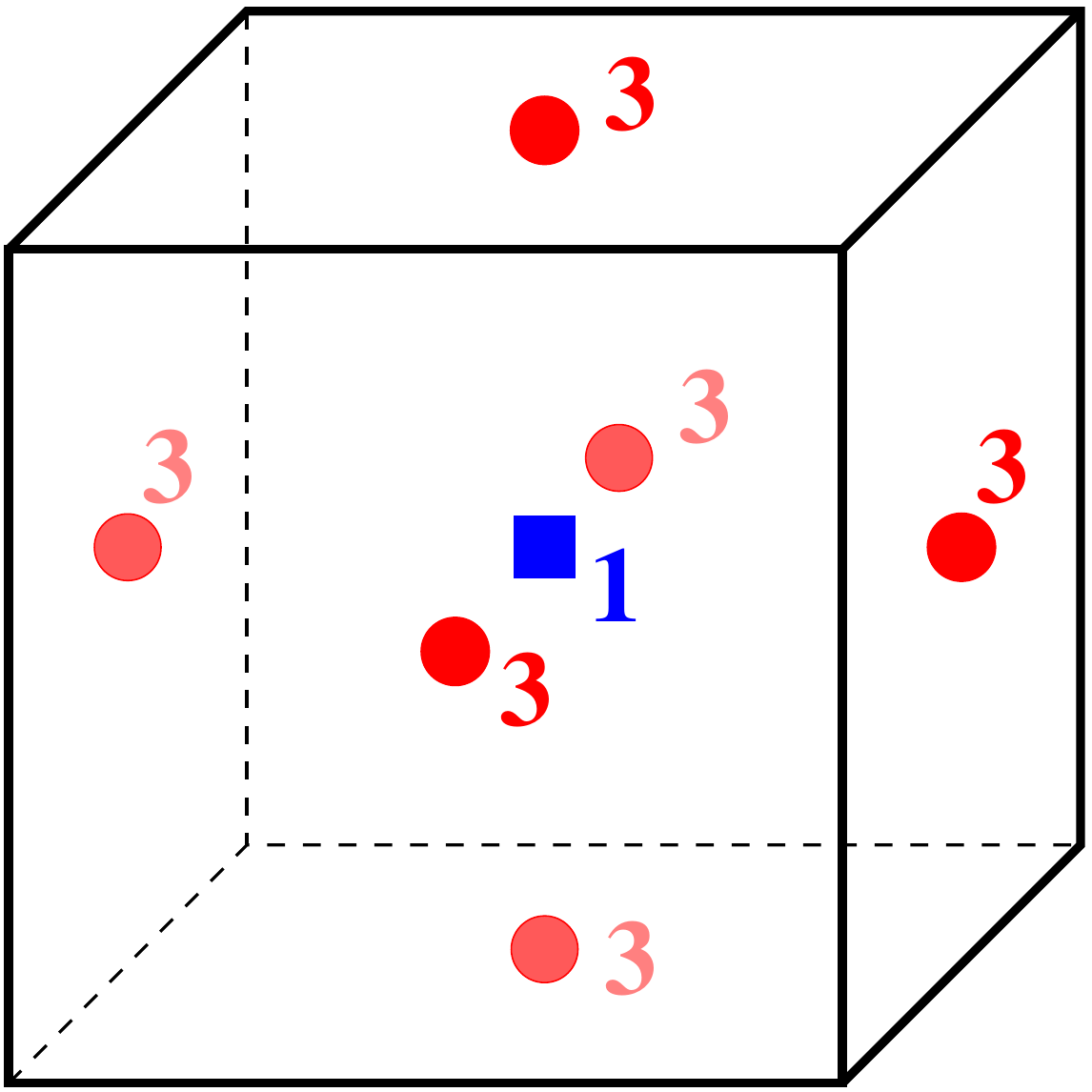} &\quad
    \includegraphics[width=0.2\textwidth]{./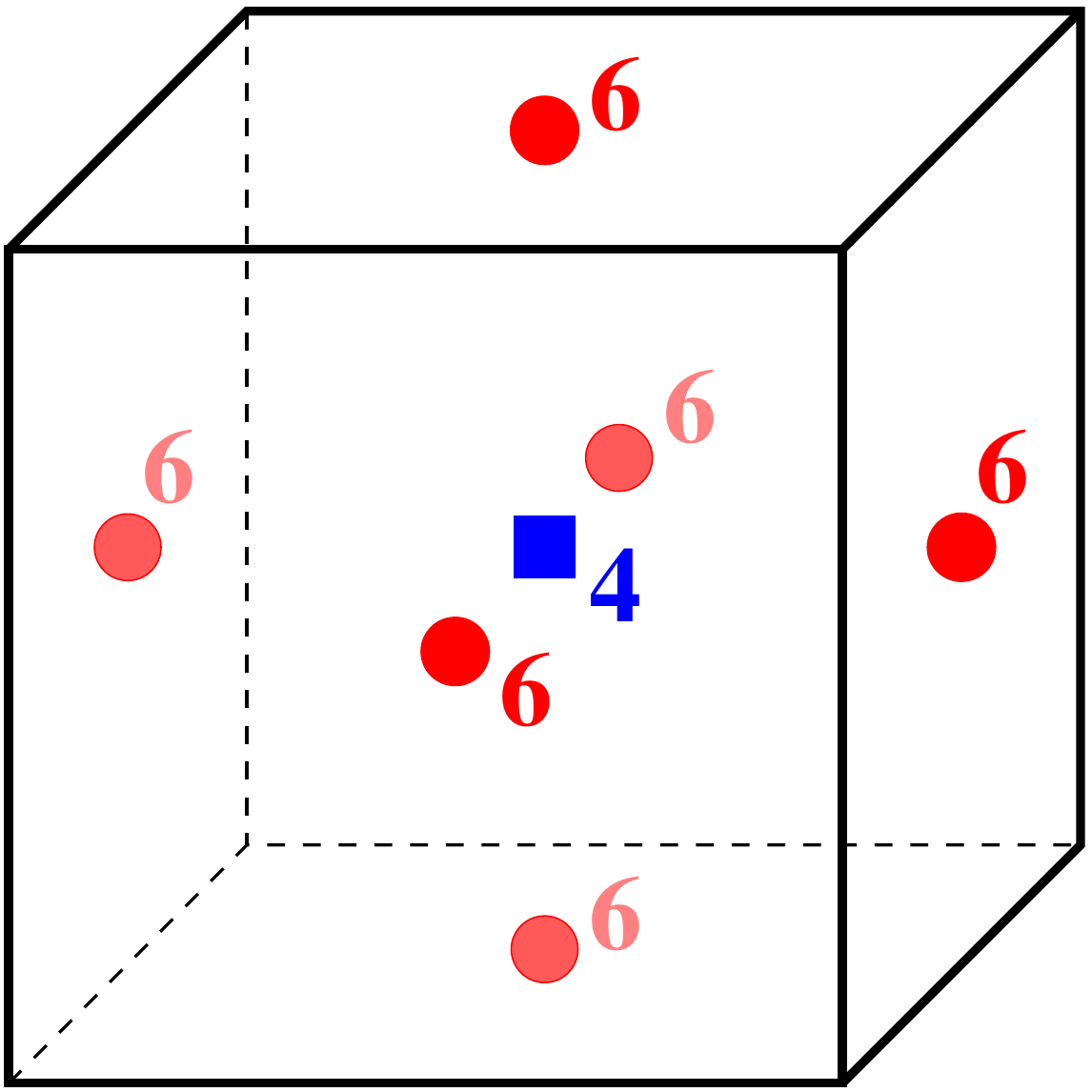} &\quad
    \includegraphics[width=0.2\textwidth]{./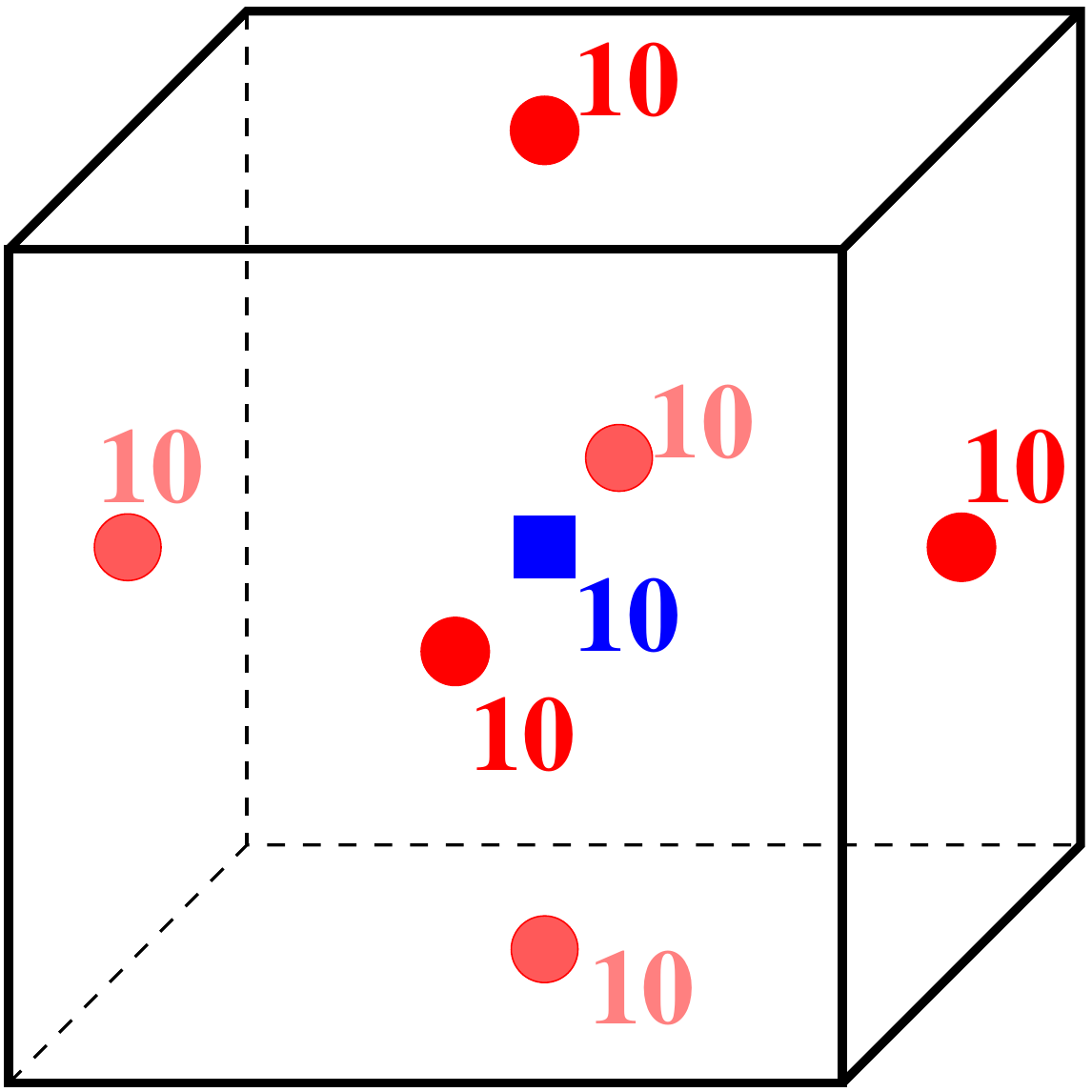} \\
    $\mathbf{k=1}$ & $\mathbf{k=2}$ & $\mathbf{k=3}$ & $\mathbf{k=4}$
  \end{tabular}
  \caption{Degrees of freedom of a cubic cell for $k=1,2,3,4$; face
    moments are marked by a circle; cell moments are marked by a square. 
    The numbers indicate the number of degrees of freedom ($1$ is not marked in the plot
    for $k=1$).}
  \label{fig:dofs:cube}
\end{figure}


In $V_h^{k}(\K)$ we can choose the following degrees of freedom:
\begin{description}
\item[$(i)$] all the moments of $\vh$ of order up to $\kk-1$ on each
  edge/face $\E\in\partial\K$:
  \begin{equation}\label{eq:dofs:01}
    \momE{\kk-1}(\vh)=\left\{\,
      \frac{1}{\mE}\int_{\E}\vh\,\mm\dS,
      \quad\forall\mm\in\calM^{\kk-1}(\E)
      \,\right\}
    \,\,
    \forall\E\subset\partial\K;
  \end{equation}
\item[$(ii)$] all the moments of $\vh$ of order up to $\kk-2$ on $\K$:
  \begin{equation}\label{eq:dofs:02}
    \momK{\kk-2}(\vh)=\left\{\frac{1}{\mK}\int_{\K}\vh\,\mm\dx,
    \quad\forall\mm\in\calM^{\kk-2}(\K)\right\}.
  \end{equation}
\end{description}
For $k=1,2,3,4$, the degrees of freedom are shown in 2D for a
triangular, a quadrilateral and an hexagonal element in
Figs.~\ref{fig:dofs:triangle}-\ref{fig:dofs:hexagon}, and in 3D for a
tetrahedral and a cubic element in
Figs.\ref{fig:dofs:tetbis}-\ref{fig:dofs:cube}.

Observe that the dimension $\sizeVhk$ given by \eqref{eq:Nk} coincides with the
total number of degrees of freedom defined in
\eqref{eq:dofs:01}-\eqref{eq:dofs:02}.
They are indeed unisolvent for the local space  $\Vhk(\K)$ as we show next:

\begin{lemma}\label{lemma:uni0}
  Let $\K$ be a simple polygon/polyhedra with $n$ edges/faces, and let
  $\Vhk(\K)$ be the space defined in \eqref{eq:def:Vhk0} for any
  integer $\kk\geq 1$.
  The degrees of freedom \eqref{eq:dofs:01}-\eqref{eq:dofs:02} are
  unisolvent for $\Vhk(\K)$.
\end{lemma}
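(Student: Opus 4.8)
The plan is to leverage the fact, already recorded in \eqref{eq:Nk}, that the number of functionals in \eqref{eq:dofs:01}--\eqref{eq:dofs:02} equals $\sizeVhk=\dim\Vhk(\K)$. Since the degree-of-freedom map $\vh\mapsto\big(\momE{k-1}(\vh),\momK{k-2}(\vh)\big)$ sends $\Vhk(\K)$ into $\re^{\sizeVhk}$ and both spaces have the same finite dimension, this map is an isomorphism if and only if its kernel is trivial. Thus I would reduce unisolvence to the single claim: if $\vh\in\Vhk(\K)$ makes all the moments in \eqref{eq:dofs:01} and \eqref{eq:dofs:02} vanish, then $\vh\equiv 0$.

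The heart of the argument is one integration by parts. Starting from the energy of $\vh$ and applying Green's first identity,
\begin{equation*}
  \int_{\K}\abs{\nabla\vh}^{2}\dx
  = -\int_{\K}(\Delta\vh)\,\vh\dx
    + \int_{\deK}\frac{\partial\vh}{\partial\n}\,\vh\dS .
\end{equation*}
The two right-hand terms vanish precisely because the two defining properties of $\Vhk(\K)$ in \eqref{eq:def:Vhk0} are matched, degree by degree, with the two families of degrees of freedom. Indeed $\Delta\vh\in\Pp^{k-2}(\K)$, so it is a combination of the scaled monomials spanning $\calM^{k-2}(\K)$, whence the vanishing cell moments \eqref{eq:dofs:02} give $\int_{\K}(\Delta\vh)\,\vh\dx=0$. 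Likewise, on each $\E\subset\deK$ the normal derivative $\partial\vh/\partial\n$ lies in $\Pp^{k-1}(\E)$, spanned by $\calM^{k-1}(\E)$, so the vanishing edge/face moments \eqref{eq:dofs:01} give $\int_{\E}(\partial\vh/\partial\n)\,\vh\dS=0$; summing over $\E\subset\deK$ removes the boundary term.

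It then follows that $\int_{\K}\abs{\nabla\vh}^{2}\dx=0$, so $\vh$ is constant on $\K$. To force this constant to zero I would invoke that the zeroth-order edge/face moment is always available in \eqref{eq:dofs:01} (the monomial $1\in\calM^{k-1}(\E)$ exists for every $k\ge 1$): for $\vh\equiv c$ the condition $\int_{\E}\vh\dS=c\,\mE=0$ yields $c=0$. The case $k=1$ is consistent under the convention $\Pp^{-1}(\K)=\{0\}$: there are then no cell degrees of freedom, but $\Delta\vh=0$ makes the volume term absent from the start, and the edge/face data still eliminate the constant.

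I expect the only genuine care to be bookkeeping rather than analysis. One must verify that the interior and skeleton polynomial degrees built into \eqref{eq:def:Vhk0} ($k-2$ and $k-1$) are exactly the moment orders prescribed in \eqref{eq:dofs:01}--\eqref{eq:dofs:02}, so each pairing above lands inside the span against which $\vh$ is required to be orthogonal; and that the count producing $\sizeVhk$ in \eqref{eq:Nk} matches the number of functionals. No solvability or elliptic-regularity issue intervenes, because membership in $\Vhk(\K)$ already encodes, via the divergence theorem applied to $\nabla\vh$, the compatibility of $\Delta\vh$ with the prescribed normal data.
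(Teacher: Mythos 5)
Your proposal is correct and follows essentially the same route as the paper's own proof: reduce unisolvence to showing that vanishing degrees of freedom force $\vh\equiv 0$, apply Green's identity so that $\Delta\vh\in\Pp^{k-2}(\K)$ and $\partial\vh\slash\partial\n\in\Pp^{k-1}(\E)$ pair exactly against the vanishing cell and edge/face moments, conclude $\nabla\vh=0$, and kill the remaining constant with the zeroth-order edge/face moments. The only cosmetic difference is that you make explicit the dimension-count reduction (injectivity suffices since the number of functionals equals $\sizeVhk$), which the paper states just before the lemma rather than inside the proof.
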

\begin{proof}
Notice, that we cannot proceed as for the unisolvence proofs in classical finite elements, since  $V_h^{k}(\K)$ would contain typically functions that are not polynomial. Still we need to show that for any $\vh \in \Vhk(\K)$ such that
\begin{equation}\label{dofs:vanish}
\momE{\kk-1}(\vh)=0 \quad \forall\, \E\subset \partial\K \qquad \mbox{    and    } \qquad \momK{\kk-2}(\vh)=0
\end{equation}
then $v_h\equiv 0$. To do so, we use Divergence Theorem (with 
 $v_h\in V_h^{k}(\K)$  and so   $\frac{\partial\vh}{\partial\n} \,\in \,\mathbb{P}^{k-1}(e)$  on each $e\subset \partial \K$ and $ \Delta v\,\in\, \mathbb{P}^{k-2}(\K)$) to get
\begin{equation}
\int_{\K}\, | \nabla v_h|^{2} dx= -\int_{\K} v_h\, \Delta  \,v_h d\bx +\sum_{e\in \partial\K}\int_{e} v_h \, \frac{\partial\vh}{\partial\n}ds =0\;,
\end{equation}
where we have set the right hand side equal to zero using the fact that the degrees of freedom of $v_h$ vanish \eqref{dofs:vanish}. Hence $\nabla v_h=0 $ in $\K$ and so $v_h =$ constant in $\K$. But, since $\momE{0}(\vh)=0$ (the zero-order moment on each $e\subset \partial \K$ vanish), we deduce  $v_h\equiv 0$ in $\K$. 
\end{proof}

\begin{remark}
  The degrees of freedom of the method~\eqref{eq:dofs:01}-\eqref{eq:dofs:02} are defined by using the monomials in $\calM^{\kk-1}(\E)$ and $\calM^{\kk-2}(\K)$ as basis functions for the polynomial spaces $\Pp^{\kk-1}(\E)$ and
  $\Pp^{\kk-2}(\K)$.
  This special choice of the basis functions gives the method an
  inherent hierarchical structure with respect to $\kk$, which may be
  useful for an efficient implementation. However, the construction of the element
  is independent of such
  choice and, in principle, any other basis (properly defined and scaled) could be used to define the degrees of freedom.
\end{remark}

\subsection{The global nonconforming virtual element space $\Vhk$}
We now introduce the nonconforming  (global) virtual element space $V_h^{k}$ of order $k$. 
For every decomposition $\Th$ into elements $\K$ (polygons or polyhedra)  and  for every $\K\in\Th$, we consider the local space $\Vhk(\K)$ with
$\kk\ge1$ as defined in~\eqref{eq:def:Vhk0}.
Then, the \emph{global} nonconforming virtual element space $\Vhk$ of order
$\kk$ is given by
\begin{align}
  \Vhk=\big\{\,\vh\in\Hnc\,:\,\vh|_{\K}\in\Vhk(\K)\quad\forall\K\in\Th\,\big\}.
  \label{eq:def:nvem0}
\end{align}

Arguing then as for a single element, we can  compute  the degrees of freedom of the global space:
\begin{align}\label{eq:dim:tot}
  \sizeVhkg =
  \begin{cases}
    \NE\,\kk          + \NK(\kk-1)\kk/2        & \textrm{for~} d=2,\\[0.25em]
    \NF\,\kk(\kk+1)/2 + \NK(\kk-1)\kk(\kk+1)/6 & \textrm{for~} d=3.
  \end{cases}
\end{align}
where $N_{\text{elements}}$ denotes the total number of elements $\K$ of the partition $\Th$ and  $N_{\text{edges}}$ and $N_{\text{faces}} $ refer to the total number of edges (in $d=2$) and faces (in $d=3$), respectively; they are the cardinality of the set $\Eh$.

Arguing again as for a single element, as degrees of freedom for the global space $V_h^{k}$ we can take:

\begin{description}
\item[$(i)$] the moments of $\vh$ of order up to $\kk\!-\!1$ on each
  $(d\!-\!1)$-dimensional edge/face $\E\in\Eh$:
  \begin{equation}\label{eq:dofs:01g}
    \momE{\kk-1}(\vh)=\left\{\,
      \frac{1}{\mE}\int_{\E}\vh\,\mm\dS,
      \quad\forall\mm\in\calM^{\kk-1}(\E),
      \,\,\forall\E\in\Eh
      \,\right\};
  \end{equation}
\item[$(ii)$] the moments of $\vh$ of order up to $\kk\!-\!2$ on each
  $d$-dimensional element $\K\in\Th$:
  \begin{equation}\label{eq:dofs:02g}
    \momK{\kk-2}(\vh)=\left\{\,
      \frac{1}{\mK}\int_{\K}\vh\,\mm\dx,
      \quad\forall\mm\in\calM^{\kk-2}(\K),
      \,\,\forall\K\in\Th
      \,\right\}.
  \end{equation}
\end{description}
As it happens for the local space $V_h^{k}(\K)$, the dimension
$N^{\text{tot}}$ given in \eqref{eq:dim:tot} coincides with the total
number of degrees of freedom
\eqref{eq:dofs:01g}-\eqref{eq:dofs:02g}. The unisolvence for the local
space $V_h^{k}(\K)$ given in Lemma~\ref{lemma:uni0}, implies simply
the unisolvence for the global space $V_h^{k}$. Since the proof is
essentially the same, we omit it for conciseness.

\subsection{Approximation properties}
Following \cite{volley}, we now revise the local approximation properties  by  polynomial functions and functions in the virtual nonconforming space. In the former case, the approximation is  obviously exactly the same as for standard and classical finite elements. In the latter case, the discussion is similar  as for conforming  \VEM{}. We briefly  recall both  for completeness of
exposition and future reference in the paper.

\PAR{Local approximation}
In view of the mesh regularity assumptions \ASSUM{A}{1}-\ASSUM{A}{3},
there exists a local polynomial approximation
$\wp\in\Pp^{\kk}(\K)$ for every smooth function $w$ defined on
$\K$.
According to~\cite{brenner-scott} for star-shaped elements  and the
generalization to the general shaped elements 
satisfying~{\bf (A0)} found
in~\cite[Section~1.6]{BeiraodaVeiga-Lipnikov-Manzini:2013:book}, the
polynomial $\wp$ has optimal approximation properties.
Thus, for every $w\in H^{s}(\K)$ with $2\leq s\leq\kk+1$ there exists
a polynomial $\wp$ in $\Pp^{\kk}(\K)$ such that
\begin{equation}\label{eq:approx:pi}
  \norm{w-\wp}_{0,\K} + \hK\snorm{w-\wp}_{1,\K}\leq C\hK^{s}\snorm{w}_{s,\K} 
  \quad w\in H^{s}(\K),\,\,2\leq s\leq\kk+1,
\end{equation} 
where $C$ is a positive constant that only depends on the
polynomial degree $\kk$ and the mesh regularity constant $\varrho$.

\PAR{Interpolation error}
Following essentially  \cite{volley,proiettori} we can define an interpolation operator in $V_h^{k}$ having optimal approximation properties. The idea is to use the dofs but without requiring an explicit construction of basis functions for $V_h^{k}$ associated to those dofs, since, unlike for classical fem, this is not needed for implementing or constructing the method. We assume that we have numbered the degrees of freedom \eqref{eq:dofs:01g}-\eqref{eq:dofs:02g} from $i=1,\ldots\sizeVhkg$, and that  we have the canonical basis associated or induced by them  (even if we do not compute such basis!). We denote by $\chi_i$ the operator that to each smooth enough function $\phi$ associates the $i-th$ degree of freedom
\begin{equation}\label{eq:can00}
  v\,\,\to\,\,\chi_i(v)=\textrm{i-th degree of freedom of~}v\qquad\forall i=1,\ldots\sizeVhkg,
\end{equation}
and the ``canonical'' virtual basis functions $\psi_i$ of $\Vhk$
satisfying the condition $\chi_i(\psi_j)=\delta_{ij}$ for
$i,j=1,\ldots,\sizeVhkg$.
Then, from the previous construction of the space, it follows easily that for any $v\in \Hnc$, there exists some $v_{I}\in V_h^{k}$  such that
\begin{equation}\label{interp:0}
\chi_{i}(v-\vI)=0 \qquad \forall\,i=1,\ldots\sizeVhkg\;.
\end{equation}
All this is enough to guarantee that we can apply the classical results of approximation. In particular, we have that
there exists a constant $C>0$, independent of $\hh$ such that for every $h>0$, every $\K\in \Th$, every $s$ with $2\leq s \leq k+1$ and every $v \in H^{s}(\K)$ the interpolant $v_{I}\in V_h^{k}$ given in  \eqref{interp:0} satisfies:
\begin{equation}\label{eq:interp:00}
  \norm{v-\vI}_{0,\K} + \hK\snorm{v-\vI}_{1,\K}\leq C\hK^{s}\snorm{v}_{s,\K }.
\end{equation}
The proof of the above approximation result can be done proceeding as for classical finite elements (see for instance \cite{verfurth-bh} which can be used taking also into account \eqref{eq:poinc:02}).

\subsection{Construction of $a_h$}
\label{subsec:ah:construction}
We now tackle the second part of the definition of the nonconforming
virtual discretization \eqref{eq:var0h}. The goal is to define a
suitable symmetric discrete bilinear form $a_h : \Vhk \times\Vhk \lor
\mathbb{R}$ enjoying good stability and approximation properties, and
ensuring at the same time that the defined bilinear form
$a_h(\cdot,\cdot)$ is indeed computable over functions in $\Vhk$. We
first split $a_h(\cdot,\cdot)$ as we did for $a(\cdot,\cdot)$ in
\eqref{eq:split0}:
\begin{align*}
  \ah{\uh}{\vh} = \sum_{\K\in\Th}\aKh{\uh}{\vh} \qquad\forall\uh,\vh\in\Vhk,
\end{align*}
with $a_h^{\K}: V_{h}^{k}(\K) \times V_{h}^{k}(\K) \lor \mathbb{R}$
denoting the restriction to the local space $V_{h}^{k}(\K)$. We now
look at the local construction of $a_h^{\K}(\cdot,\cdot)$.

We start by noticing that on each element $\K$, for
$p\in\Pp^{\kk}(\K)$ and $v_h\in V^{k}_h(\K)$ one can compute exactly
$a^{K}(p,v_h)$ by using only the local dofs given in
\eqref{eq:dofs:01}-\eqref{eq:dofs:02}. In fact, since
\begin{equation}\label{eq:obs:00}
  \aK{p}{\vh} = 
  \int_{\K}\nabla p\cdot\nabla\vh\dx = 
  -\int_{\K} \vh\,\Delta p \dx +\int_{\partial\K} \vh\,\frac{\partial p}{\partial\n}\dx\;,
\end{equation}
one only needs to observe that the two integrals on the right hand
side above are determined exactly from the dofs
\eqref{eq:dofs:01}-\eqref{eq:dofs:02}, without requiring any further
explict knowledge of the function $v_h$ in $\K$.

To construct now $a_h(\cdot,\cdot)$, always following
\cite{proiettori} we first define a projection operator that can be
thought of as the Ritz-Galerkin projection in the classical finite
elements.  Let $ \Pi^{\nabla} : H^{1}(\K) \lor \mathbb{P}^{k}(\K)$ be
defined by
\begin{equation}\label{eq:def:Pia}
  \int_{\K}\nabla(\PINK(\vh)-\vh)\,\nabla q\dx=0 \qquad \forall q\in\Pp^{\kk}(\K),\,\vh\in\Vhk(\K)
\end{equation}
together with the  condition
\begin{align}
  \int_{\partial\K}(\PINK(\vh)-\vh)\dS&=0 \qquad\textrm{if~}k=1,   \label{eq:def:Pib:1}\\[0.5em]
  \int_{\K}       (\PINK(\vh)-\vh)\dx&=0 \qquad\textrm{if~}k\ge 2.\label{eq:def:Pib:k}
\end{align}
Note that $ \Pi^{\nabla} (v)$ is indeed computable for any $v\in
V_h^{k}$ from the degrees of freedom
\eqref{eq:dofs:01}-\eqref{eq:dofs:02} in view of the observation
\eqref{eq:obs:00} and the symmetry of the bilinear form. Note also that
$\Pi^{\nabla}$ is the identity operator on $\Pp^{\kk}(\K)$, i.e.,
$\PINK\big(\Pp^{\kk}(\K)\big)=\Pp^{\kk}(\K)$.

We then define
\begin{equation}\label{eq:def:ah}
  \aKh{\uh}{\vh}=\aK{\PINK(\uh)}{\PINK(\vh)} + \SK{\uh-\PINK(\uh)}{\vh-\PINK(\vh)} \qquad \forall\, u,v\in V_h^{k}(\K)\;,
\end{equation}
where the term $S^{\K}(\cdot,\cdot)$ 
is a symmetric bilinear form whose matrix representation in the
canonical basis functions $\{\psi_i\}$ of $\Vhk(\K)$ is spectrally
equivalent to the identity matrix scaled by the factor $\gamma_{\K}$
defined as:
\begin{equation}\label{eq:def:gamma}
  \gamma_{\K}= \hh^{d-2}.
\end{equation}
Thus, for every function $\vh$ in $\Vhk(\K)$, it holds that
\begin{align}
  \label{eq:def:SK}
  \SK{\vh}{\vh}\simeq
  \hh^{d-2}\vect{v}_{\hh}^{t}\vect{v}_{\hh}
\end{align}
where $\vect{v}_{\hh}$ is the vector collecting the degrees of freedom
of $\vh$.
The scaling of $S^{\K}$ guarantees that
\begin{equation}\label{eq:per-stab}
  c^{\ast}\aK{\vh}{\vh}\leq\SK{\vh}{\vh}
  \leq c_{\ast}\aK{\vh}{\vh}\qquad\forall\vh\in\textrm{ker}(\PINK),
\end{equation}
for some positive constants $c_{\ast}$ and $c^{\ast}$ independent of
$\hh$.

We now show that  the  construction of  $a_{h}^{\K}(\cdot,\cdot)$  guarantees the usual  consistency and stability properties in VEM:

\begin{lemma}\label{le:1}
  For all $h>0$ and for all $\K\in \Th$, the bilinear form
  $a_{h}^{\K}(\cdot,\cdot)$ defined in \eqref{eq:def:ah} satisfies the
  following consistency (with respect to polynomials
  $\mathbb{P}^{k}(\K)$) and stability properties :
\begin{itemize}
\item \emph{$\kk$-Consistency:} 
  \begin{equation}\label{eq:consistency}
    \aKh{p}{\vh}=\aK{p}{\vh}\qquad\forall p\in\Pp^{\kk}(\K),\,\,\forall\vh\in\Vhk(\K).
  \end{equation}
\item \emph{Stability:} there exists two positive constants
  $\alpha^{\ast}$ and $\alpha_{\ast}$ independent of mesh size $\hh$ but depending on the shape regularity of the partition such that
  \begin{equation}\label{eq:stability}
    \alpha_{\ast}\aK{\vh}{\vh}\leq\aKh{\vh}{\vh}\leq
    \alpha^{\ast}\aK{\vh}{\vh}\quad\forall\vh\in\Vhk(\K).
  \end{equation}
\end{itemize}
\end{lemma}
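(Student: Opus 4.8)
The plan is to verify the two properties separately, in both cases exploiting that $\PINK$ is an $a^{\K}$-orthogonal projection onto $\Pp^{\kk}(\K)$. For the \emph{consistency} property \eqref{eq:consistency}, I would first use that $\PINK$ acts as the identity on $\Pp^{\kk}(\K)$, so that for $p\in\Pp^{\kk}(\K)$ one has $\PINK(p)=p$ and hence $p-\PINK(p)=0$; the stabilization term in \eqref{eq:def:ah} then drops out and $\aKh{p}{\vh}=\aK{p}{\PINK(\vh)}$. It remains to show $\aK{p}{\PINK(\vh)}=\aK{p}{\vh}$, which follows at once from the defining orthogonality \eqref{eq:def:Pia} applied with the polynomial test function $q=p$, giving $\aK{p}{\PINK(\vh)-\vh}=0$.

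For the \emph{stability} property \eqref{eq:stability}, the key preliminary observation is that $\PINK$ is idempotent (being the identity on its range $\Pp^{\kk}(\K)$), so the remainder $\vh-\PINK(\vh)$ belongs to $\textrm{ker}(\PINK)$; this is exactly the set on which the stabilization bounds \eqref{eq:per-stab} are assumed to hold. I would then write $\vh=\PINK(\vh)+(\vh-\PINK(\vh))$ and observe, again from \eqref{eq:def:Pia} with $q=\PINK(\vh)\in\Pp^{\kk}(\K)$, that the two summands are $a^{\K}$-orthogonal. This yields the Pythagorean identity
\[
\aK{\vh}{\vh}=\aK{\PINK(\vh)}{\PINK(\vh)}+\aK{\vh-\PINK(\vh)}{\vh-\PINK(\vh)}.
\]

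With these two facts in hand the estimate is routine. The definition \eqref{eq:def:ah} gives
\[
\aKh{\vh}{\vh}=\aK{\PINK(\vh)}{\PINK(\vh)}+\SK{\vh-\PINK(\vh)}{\vh-\PINK(\vh)},
\]
and bounding the stabilization term above and below by means of \eqref{eq:per-stab} applied to $\vh-\PINK(\vh)\in\textrm{ker}(\PINK)$, then comparing with the Pythagorean identity, produces \eqref{eq:stability} with the explicit constants $\alpha_{\ast}=\min(1,c^{\ast})$ and $\alpha^{\ast}=\max(1,c_{\ast})$.

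I do not expect a genuine obstacle here: this is the by-now standard \VEM{} consistency-plus-stability computation. The only point requiring care is the justification that $\vh-\PINK(\vh)\in\textrm{ker}(\PINK)$ and that the two components of the splitting are $a^{\K}$-orthogonal, since both rely on reading \eqref{eq:def:Pia}--\eqref{eq:def:Pib:k} correctly as defining an orthogonal projection; once that is established, everything reduces to the elementary inequalities $\min(1,c^{\ast})(x+y)\le x+c\,y\le\max(1,c_{\ast})(x+y)$ for the nonnegative quantities $x=\aK{\PINK(\vh)}{\PINK(\vh)}$ and $y=\aK{\vh-\PINK(\vh)}{\vh-\PINK(\vh)}$.
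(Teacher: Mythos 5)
Your proof is correct and takes essentially the same route as the paper's: consistency via $\PINK(p)=p$ (so the stabilization term vanishes) together with the orthogonality \eqref{eq:def:Pia}, and stability via \eqref{eq:per-stab} applied to $\vh-\PINK(\vh)\in\textrm{ker}(\PINK)$ combined with the $a^{\K}$-orthogonal splitting, arriving at the same constants $\alpha_{\ast}=\min(1,c^{\ast})$ and $\alpha^{\ast}=\max(1,c_{\ast})$. The only difference is cosmetic: you make explicit the idempotency of $\PINK$ and the Pythagorean identity $\aK{\vh}{\vh}=\aK{\PINK(\vh)}{\PINK(\vh)}+\aK{\vh-\PINK(\vh)}{\vh-\PINK(\vh)}$, which the paper leaves implicit in its chain of inequalities.
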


\begin{proof}
The $\kk$-consistency property in~\eqref{eq:consistency} follows immediately
from definition \eqref{eq:def:ah} and the fact that $\PINK$ is the
identity operator on $\Pp^{\kk}(\K)$.
Since $\Pi^{\nabla} \mathbb{P}^{k}(\K)=\mathbb{P}^{k}(\K)$, it follows
that $\SK{p-\PINK(p)}{\vh-\PINK(\vh)}=0$, and so using the definition
of $\Pi^{\nabla}$ and the definition \eqref{eq:def:ah} we have for
every $p\in\Pp^{\kk}(\K)$ and every $\vh\in\Vhk(\K)$,
\begin{equation*}
  \aKh{p}{\vh}=\aK{\PINK(p)}{\PINK(\vh)}=\aK{\PINK(p)}{\vh}=\aK{p}{\vh}
\end{equation*}
 which gives
\eqref{eq:consistency} and proves the $k$-consistency property.

To show \eqref{eq:stability}, from the definition of
$a_{h}^{\K}(\cdot,\cdot)$ given in \eqref{eq:def:ah}, the symmetry and
\eqref{eq:per-stab}, we have
\begin{align*}
  \aKh{\vh}{\vh}
  &\leq \aK{\PINK(\vh)}{\PINK(\vh)} +c_{\ast}\aK{\vh-\PINK(\vh)}{\vh-\PINK(\vh)}\\
  &\leq \max(1,c_{\ast})\big( \aK{\PINK(\vh)}{\PINK(\vh)}+\aK{\vh-\PINK(\vh)}{\vh-\PINK(\vh)} \big)\\
  &= \alpha^{\ast}\aK{\vh}{\vh},
\end{align*}
and
\begin{align*}
  \aKh{\vh}{\vh}
  &\geq \aK{\PINK(\vh)}{\PINK(\vh)} +c^{\ast}\aK{\vh-\PINK(\vh)}{\vh-\PINK(\vh)}\\
  &\geq \min(1,c^{\ast})\big( \aK{\PINK(\vh)}{\PINK(\vh)}+\aK{\vh-\PINK(\vh)}{\vh-\PINK(\vh)} \big)\\
  &= \alpha_{\ast}\aK{\vh}{\vh},
\end{align*}
which shows \eqref{eq:stability} with $\alpha_{\ast}=\min(1,c^{\ast})$ and  $\alpha^{\ast}:=\max(1,c_{\ast})$ and concludes the proof.
\end{proof}

Cauchy-Schwarz inequality, together with \eqref{eq:stability} and the boundness of the local continuous
bilinear form give
\begin{align}
  a_h^{\K} (u_h,v_h)
  & \leq (a_h^{\K} (u_h,u_h))^{1/2} (a_h^{\K} (v_h,v_h))^{1/2}\leq \alpha^{\ast} (a^{\K} (u_h,u_h) )^{1/2}(a^{\K} (v_h,v_h) )^{1/2} \nonumber\\
  &=  \alpha^{\ast} \|\nabla u_h\|_{0,\K} \|\nabla v_h\|_{0,\K} \qquad \forall \,\, u_h\, v_h \in V_h^{k}(\K)\;,
  \label{eq:cont0}
\end{align}
which establishes the continuity of $a^{K}_h$.\\

\subsection{Construction of the right-hand-side}
\label{subsec:fh:construction}.
The forcing term is constructed in the same way as it is done for the conforming VEM. The idea is to use, whenever is possible, the degrees of freedom \eqref{eq:dofs:02g} to compute exactly $f_h$. Denoting by $\mathcal{P}^{\ell}_{\K}:L^{2}(\K) \lor \mathbb{P}^{\ell}(\K)$ the $L^{2}$-orthogonal projection onto the space $\mathbb{P}^{\ell}(\K)$ for $\ell\ge0$, we define $f_h$ at the element level by:
\begin{equation}\label{eq:def-f}
(f_h)|_{\K}:=\left\{ \begin{aligned}
&\mathcal{P}_{\K}^{0}(f) \qquad &&\mbox{ for  } k= 1\;,\\
&\mathcal{P}_{\K}^{k-2}(f) \qquad &&\mbox{ for  } k\ge 2\;.
\end{aligned}\right. \qquad \qquad \forall\, \K\in \Th\;.
\end{equation}
In the above definition for $k\geq 2$, the right hand side $\langle f_h,v_h\rangle$ is fully computable for functions in $V_h^{k}$ since:
\begin{equation*}
\langle f_h,v_h\rangle:=\sum_{\K} \int_{\K} \mathcal{P}_{\K}^{k-2}(f) v_h d\bx=\sum_{\K} \int_{\K} f\mathcal{P}_{\K}^{k-2}(v_h) d\bx\;.
\end{equation*}
which is readily available from \eqref{eq:dofs:02g}.

For $k=1$,  for each $\K\in \Th$ we first define:
\begin{equation}\label{v:tilda}
\tilde{v}_h|_{\K}:=\frac{1}{n}\sum_{e\in \partial\K} \frac{1}{|e|} \int_{e} v_h ds \,\, \approx \,\, \mathcal{P}_{\K}^{0}(v_h)\;,
\end{equation}
and notice that $\tilde{v}_h|_{\K}$ is a first order approximation to $\mathcal{P}_{\K}^{0}(v_h)=\frac{1}{|\K|}\int_{\K} v_h dx$; i.e., $|\tilde{v}_h|_{\K}-\mathcal{P}_{\K}^{0}(v_h)|\leq Ch |v|_{1,\K}$. 
Then, the idea is to use $\tilde{v}_h$ to compute the approximation of the right hand side:
\begin{equation*}
\begin{aligned}
\langle f_h,\tilde{v}_h\rangle &:=\sum_{\K} \int_{\K} \mathcal{P}_{\K}^{0}(f) \tilde{v}_h d\bx \,\, \approx \,\, \sum_{\K} |\K|  \mathcal{P}_{\K}^{0}(f) \mathcal{P}_{\K}^{0}(v_h)\;. &&
\end{aligned}
\end{equation*}
Notice that the computation of the right most term above would require the knowledge of the average values of $v_h$ on each element $\K$ and such information , in priciple is not available.  Therefore, we approximate  $\mathcal{P}_{\K}^{0}(v_h)$ by using  the  numerical quadrature rule defined by $\tilde{v}_h$ that  only uses the moments $  \momE{0}(\vh)$ defined in \eqref{eq:dofs:01}.\\

Furthermore, in both cases ($k\geq 2$ and $k=1$), an estimate for the error in the approximation is already available by using the definition of the $L^{2}$-projection, Cauchy-Schwarz and standard approximation estimates \cite{ciar2}. For $ k\ge 2$ and $s\geq 1$  one easily has
\begin{align*}
\left| \langle f,v_h\rangle-\langle f_h,v_h\rangle \right|&=\left|  \sum_{\K} \int_{\K} (f-\mathcal{P}_{\K}^{k-2}(f)) v_h d\bx\right| &&\\
&=\left|  \sum_{\K} \int_{\K} \, (f-\mathcal{P}_{\K}^{k-2}(f))   \,(v_h -\mathcal{P}_{\K}^{0}(v_h)) d\bx\right| &&\\
&\leq\left\|\, \, (f-\mathcal{P}_{\K}^{k-2}(f))\right\|_{0,\Th} \big\| \,\, (v_h -\mathcal{P}_{\K}^{0}(v_h))\big\|_{0,\Th} &&\\
&\leq C h^{\min{(k,s)}} |f |_{s-1,\Th} |v_h|_{1,h}\;. &&
\end{align*}
For $k=1$,  the definition of $f_h$ together with using repeteadly the definition of the $L^{2}$-projection, Cauchy-Schwarz inequality and standard approximation estimates, give
\begin{align*}
\left| \langle f,v_h\rangle-\langle f_h,\tilde{v}_h\rangle \right|&=\left|  \sum_{\K} \int_{\K}\left( f\,v_h - \mathcal{P}_{\K}^{0}(f) \tilde{v}_h\right) d\bx\right| &&\\
&\leq \left|  \sum_{\K} \int_{\K}\left( f - \mathcal{P}_{\K}^{0}(f)) v_h\right) d\bx\right| +\left|  \sum_{\K} \int_{\K}\mathcal{P}_{\K}^{0}(f)\left( v_h -\tilde{v}_h\right) d\bx\right| &&\\
&\leq \left|  \sum_{\K} \int_{\K}\left( \, f-\mathcal{P}_{\K}^{0}(f)\, \right) \left(v_h - \mathcal{P}_{\K}^{0}(v_h)\right)   d\bx\right|  +\|\mathcal{P}_{\K}^{0}(f)\|_{0,\Th} \|v_h-\tilde{v}_h\|_{0,\Th}&&\\
&\leq  \left\|  \, (\mathcal{P}_{\K}^{0}(f)-f)\right\|_{0,\Th}\, \|\, \,( \mathcal{P}_{\K}^{0}(v_h)-v_h)\|_{0,\Th} +Ch \|f\|_{0,\O} |v_h|_{1,h} &&\\
&\leq   C  h^{\min{(k,s)}} |f |_{s-1,\Th} |v_h|_{1,h} +Ch \|f\|_{0,\O} |v_h|_{1,h} \;.&&
\end{align*}

\subsection{Construction of the boundary term}
In the case of non-homogenous Dirichlet boundary conditions, we need to construct the corresponding boundary term.  We define $g_h:=\mathcal{P}^{k-1}_e(g)$ and observe that 
 in view of the degrees of freedom \eqref{eq:dofs:01g}, with such definition  the boundary  term will be fully computable. Indeed,
\begin{equation}
\int_{\Ehb} g_h v_h \dS:=\sum_{e\in\Ehb} \int_e \mathcal{P}^{k-1}_e(g)v_h \dS=\sum_{e\in\Ehb} \int_e g\mathcal{P}^{k-1}_e(v_h) \dS \quad \forall\, v_h \in V_h^{k}\;.
\end{equation}

\section{Error Analysis}\label{sec:3}
In this section we present the error analysis, in the energy and
$L^{2}$-norms, for the nonconforming virtual element approximation
\eqref{eq:var0h} to the model problem \eqref{eq:var00}.

We start by noticing that the nonconformity of our discrete approximation space  $V^{k}_h\subset \Hnc \nsubseteq H^{1}(\O)$ introduces a kind of {\it consistency error} in the approximation to the solution $u\in V$. In fact it should be noticed that using \eqref{eq:split0} together with standard integration by parts give
\begin{align}\label{eq:a:nonc0}
a(u,v) &= \sum_{\K\in \Th} \int_{\K} -( \Delta u) v dx+ \sum_{\K\in \Th} \int_{\partial\K} \frac{\partial u}{\partial \n_{\K}} v \dS  &&\nonumber\\
&= (f,v) +\mathcal{N}_h(u,v) \qquad \quad \forall\, v \in {H}^{1,nc}(\Th;1)\;.
\end{align}
For $u \in H^{s}(\O), \,\, s\geq 3/2$, the term $\mathcal{N}_h$ can be rewritten as (using \eqref{eq:def-jump})
\begin{equation}\label{eq:def:Nh}
\mathcal{N}_h(u,v)=\sum_{\K\in \Th} \int_{\partial\K} \frac{\partial u}{\partial \n_{\K}}  v \dS  =\sum_{e\in \Eh}\int_{e}  \nabla u \cdot \jump{v} \dS.
\end{equation}

The term $\mathcal{N}_h$ measures the extent to which the continuous solution $u$ fails to satisfy the {\it virtual element} formulation \eqref{eq:var0h}.
In that respect, it could be regarded as a {\it consistency error} although it should be noted that such inconsistency here (as for classical nonconforming FEM) is due to the fact that the test functions $v_h\in V_h \nsubseteq V$, and therefore an error arises when using the variational formulation of the continuous solution \eqref{eq:var00}. 

We now provide an estimate for the term measuring the nonconformity. We have the following result
\begin{lemma}\label{le:Nh}
Assume  {\bf (A0)} is satisfied. Let $k\ge1$ and let $u\in H^{s+1}(\O)$ with $s\geq 1$ be the solution of \eqref{eq:var00}.
Let  $v\in H^{1,nc}(\Th;1)$ as defined in \eqref{eq:h1:02b}. Then, there exists a constant $C>0$ depending only on the polynomial degree and the mesh regularity such that 
\begin{equation}
|\mathcal{N}(u,v)| \leq C  h^{\min(s,k)} \|u\|_{s+1,\O}|v|_{1,h}
\end{equation}
where  $\mathcal{N}_h(u,v)$ is defined in \eqref{eq:def:Nh}.
\end{lemma}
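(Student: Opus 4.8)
The plan is to estimate $\mathcal{N}_h(u,v)$ edge by edge, combining the single-valuedness of the normal derivative of the smooth exact solution with the jump-moment conditions satisfied by $v$. First I would start from the representation \eqref{eq:def:Nh}, namely $\mathcal{N}_h(u,v)=\sum_{e\in\Eh}\int_e\nabla u\cdot\jump{v}\dS$, and note that, since $u\in H^{s+1}(\O)$ with $s\ge1$, the trace of $\nabla u$ is single-valued on each $e\in\Eh$. Writing $\jump{v}=(v^+-v^-)\n_e$ on interior edges and $\jump{v}=v\,\n_e$ on boundary edges, only the normal component of $\nabla u$ survives the contraction, so on every $e$ the integrand equals $\frac{\partial u}{\partial\n}\,(\jump{v}\cdot\n_e)$ with $\frac{\partial u}{\partial\n}$ single-valued.

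The key step is to subtract a polynomial normal flux. Let $\mathcal{P}^{k-1}_e$ be the $L^2(e)$-orthogonal projection onto $\Pp^{k-1}(e)$. Since $\mathcal{P}^{k-1}_e\!\big(\tfrac{\partial u}{\partial\n}\big)\in\Pp^{k-1}(e)$, the moment conditions \eqref{eq:h1:02b} give $\int_e \mathcal{P}^{k-1}_e\!\big(\tfrac{\partial u}{\partial\n}\big)\,(\jump{v}\cdot\n_e)\dS=0$ on each $e$. Subtracting this vanishing term and applying the Cauchy--Schwarz inequality on $e$,
\[
  \Big|\int_e\nabla u\cdot\jump{v}\dS\Big|
  \le \Norm{\tfrac{\partial u}{\partial\n}-\mathcal{P}^{k-1}_e\!\big(\tfrac{\partial u}{\partial\n}\big)}_{0,e}\,\norm{\jump{v}\cdot\n_e}_{0,e}.
\]

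Two local estimates then finish the per-edge bound. For the flux factor I would combine the best-approximation property of $\mathcal{P}^{k-1}_e$ with a scaled trace inequality and the polynomial-approximation theory valid under {\bf (A0)} (the same tools behind \eqref{eq:approx:pi}) to obtain
\[
  \Norm{\tfrac{\partial u}{\partial\n}-\mathcal{P}^{k-1}_e\!\big(\tfrac{\partial u}{\partial\n}\big)}_{0,e}
  \le C\,h_e^{\min(s,k)-1/2}\,\norm{u}_{s+1,\K_e},
\]
where $\K_e$ is an element containing $e$. For the jump factor, since $v^+-v^-$ (respectively $v$ on a boundary edge) has vanishing average on $e$ by \eqref{eq:h1:02b}, a trace--Poincare inequality yields
\[
  \norm{\jump{v}\cdot\n_e}_{0,e}\le C\,h_e^{1/2}\big(\snorm{v}_{1,\K^+}+\snorm{v}_{1,\K^-}\big).
\]

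Multiplying the two bounds gives the per-edge estimate $C\,h_e^{\min(s,k)}\norm{u}_{s+1,\K_e}\big(\snorm{v}_{1,\K^+}+\snorm{v}_{1,\K^-}\big)$; using $h_e\le h$, summing over $\Eh$, and applying the discrete Cauchy--Schwarz inequality together with the uniformly bounded number of faces per element ensured by {\bf (A0)} produces $|\mathcal{N}_h(u,v)|\le C\,h^{\min(s,k)}\norm{u}_{s+1,\O}\snorm{v}_{1,h}$, the claimed bound. The main obstacle I anticipate is twofold: establishing the flux-approximation estimate with the precise exponent $\min(s,k)-\tfrac12$ for a trace of $\nabla u$ on a face of a general polytope, which needs the scaled trace and $\Pp^{k-1}$-approximation results on arbitrarily shaped elements rather than affine-simplex arguments; and, more structurally, the fact that the attainable order is dictated by the degree up to which the jump moments vanish, so it is exactly the orthogonality \eqref{eq:h1:02b} up to degree $k-1$ that annihilates $\mathcal{P}^{k-1}_e(\partial u/\partial\n)$ and lifts the naive first-order estimate to $h^{\min(s,k)}$.
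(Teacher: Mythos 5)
Your proposal is correct and follows essentially the same route as the paper's proof: insert the $L^{2}(\E)$-projection $\mathcal{P}_e^{k-1}$ of the flux (annihilated by the jump-moment orthogonality), apply Cauchy--Schwarz edge by edge, and combine the $h^{\min(s,k)-1/2}$ flux-approximation estimate with the $h^{1/2}$ trace--Poincar\'e bound on the zero-mean jump before summing over $\Eh$. Note that, exactly like the paper's own proof, your key cancellation of $\mathcal{P}^{k-1}_e\big(\tfrac{\partial u}{\partial\n}\big)$ for $k\ge 2$ actually uses the moments of $v$ up to degree $k-1$, i.e.\ $v\in H^{1,\textrm{nc}}(\Th;k)$, rather than the weaker hypothesis $v\in H^{1,\textrm{nc}}(\Th;1)$ stated in the lemma --- a shared (and harmless, given how the lemma is invoked) imprecision rather than a gap in your argument.
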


\begin{proof}
The proof follows along the same line as the one for classical nonconforming methods. We briefly report it here for the sake of simplicity.
From the definition of the space $H^{1,nc}(\Th;k)$ with $k=1$, the definition of the $L^{2}(e)$-projection and Cauchy-Schwarz we find 
\begin{align}\label{eq:estima:Nh}
|\mathcal{N}_h(u,v_h)| &=\left|\sum_{e\in \Eh}\int_{e} \left(  \nabla u  -\mathcal{P}_{e}^{k-1} ( \nabla u) \right) \cdot \jump{v_h} \dS \right| &&\nonumber\\
&=\left|\sum_{e\in \Eh}\int_{e} \left(  \nabla u  -\mathcal{P}_{e}^{k-1} ( \nabla u) \right) \cdot \left( \jump{v_h}-\mathcal{P}_e^{0}( \jump{v_h} )\right) \dS \right| &&\nonumber\\
&\leq \sum_{e\in \Eh} \left\| \nabla u  -\mathcal{P}_{e}^{k-1} ( \nabla u) \right\|_{0,e} \left\| \jump{v_h}-\mathcal{P}_e^{0}( \jump{v_h})\right\|_{0,e}\;, &&
\end{align}
where $\mathcal{P}^{\ell}_{e}:L^{2}(e) \lor \mathbb{P}^{\ell}(e)$ is the $L^{2}$-orthogonal projection onto the space $\mathbb{P}^{\ell}(e)$ for $\ell\ge0$. 

Using now  standard approximation estimates (see \cite{ciar2}) we have for each $e=\partial\K^{+}\cap\partial\K^{-1}$,
\begin{align*}
\left\|  \nabla u  -\mathcal{P}_{e}^{k-1} (\nabla u) \right\|_{0,e} & \leq C  h^{\min(s,k)-1/2} \sqrt{ \|u\|_{s+1,\K^{+}\cup \K^{-}}}\;, &&\\
\left\|  \jump{v_h}-\mathcal{P}_e^{0}(\jump{v_h})    \right\|_{0,e} &\leq Ch^{1/2} \| \nabla v_h\|_{0,\K^{+}\cup \K^{-}}\;. &&
\end{align*}
Hence, substituting the above estimates into \eqref{eq:estima:Nh} and summing over all elements, the proof is concluded.
\end{proof}

\begin{remark}
  To obtain at least an estimate of first order of the term
  $\mathcal{N}_h(u,v)$, notice that the proof of Lemma \ref{le:Nh}
  requires further regularity (at least $u\in H^{2}(\O)$) than the one
  that problem \eqref{eq:mod00:A}-\eqref{eq:mod00:B} might have (as
  for instance in the case $f \in H^{-1}(\O)$ or even $f \in
  L^{2}(\O)$ and the domain not convex or with a second order problem
  with a jumping coefficient $\mathbb{K}$). We have followed the
  classical line for the error analysis to keep the presentation of
  the method simpler. Of course one might consider the extension of
  the results in \cite{gudi} to estimate the nonconformity error
  arising in the nonconforming virtual approximation. We wish to note
  though, that such extension will require to have laid for virtual
  elements, some results on a-posteriori error estimation. While that
  would be surely possible and it might merit further investigation,
  it is out of the scope of this paper and we feel that by sticking to
  the present proof, we are able to convey in a better way (and with a
  neat presentation) the novelty and new idea of the paper.
\end{remark}

We have the following result.

\begin{teo}\label{teo0}
Let  {\bf (A0)} be satisfied and let $u$ be the solution of \eqref{eq:var00}. Consider the nonconforming virtual element method in \eqref{eq:var0h}, 
with  $V_h^{k}$   given in \eqref{eq:def:nvem0} and with $a_h(\cdot,\cdot)$ and $f_h \in (V_h^{k})^{'}$ defined as in Section \ref{sec:2}.  Then, problem \eqref{eq:var0h} has a unique solution  $u_h\in V_h^{k}$. Moreover, for every approximation $u^{I}\in V_h^{k}$ of $u$ and for every piecewise polynomial approximation $u_{\pi}\in \mathbb{P}^{k}(\Th)$ of $u$, there exists a constant $C>0$ depending only on $\alpha_{\ast}$ and $\alpha^{\ast}$ in \eqref{eq:stability} such that the following estimate holds
\begin{equation}\label{eq:estima:h1}
|u-u_h|_{1,h} \leq C( |u-u^{I}|_{1,h}+ |u-u_{\pi}|_{1,h}   + \sup_{v_h \in V_h^{k}}\frac{|<f-f_h,v_h>|}{|v_h|_{1,h}}+ \sup_{v_h \in V_h^{k}}\frac{\mathcal{N}_h(u,v_h)}{|v_h|_{1,h}})\;.
\end{equation}
Furthermore, if $f\in H^{s-1}(\O)$  with $s\geq 1$, then we also have
\begin{equation}\label{eq:estima44}
|u-u_h|_{1,h} \leq C h^{\min{(k,s)}} ( \|u\|_{1+s,\O}   +\|f\|_{s-1,\O} )\;.
\end{equation}

\end{teo}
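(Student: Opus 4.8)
The plan is to proceed in three stages: establish well-posedness, derive the abstract quasi-optimality bound \eqref{eq:estima:h1} by a Strang-type argument, and then insert the concrete approximation estimates to obtain the rate \eqref{eq:estima44}. For well-posedness, I would sum the stability bound \eqref{eq:stability} over all $\K\in\Th$ to conclude that $\ah{\cdot}{\cdot}$ is coercive on $\Vhk$ with respect to the broken seminorm $|\cdot|_{1,h}$ (constant $\alpha_{\ast}$) and continuous (constant $\alpha^{\ast}$, using also \eqref{eq:cont0}). Since $\Vhk\subset\Hnc$ and, by the Poincar\'e inequality \eqref{eq:poinc:02}, $|\cdot|_{1,h}$ is a genuine norm on $\Hnc$, the form is elliptic and the computed right-hand side is a bounded functional, so the Lax--Milgram theorem gives a unique $\uh\in\Vhkg$.

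For the abstract estimate I would fix any $\uI\in\Vhk$ and any piecewise polynomial $\up\in\mathbb{P}^{k}(\Th)$, set $\delta_h:=\uh-\uI\in\Vhk$, and start from coercivity together with the discrete equation \eqref{eq:var0h}:
\[
  \alpha_{\ast}|\delta_h|_{1,h}^{2}\le \ah{\delta_h}{\delta_h}=\dual{\fh}{\delta_h}-\ah{\uI}{\delta_h}.
\]
The crucial manipulation is to insert $\up$ element by element and invoke the $\kk$-consistency \eqref{eq:consistency}: since $\up|_{\K}\in\mathbb{P}^{k}(\K)$, one has $\aKh{\up}{\delta_h}=\aK{\up}{\delta_h}$, so that
\[
  \ah{\uI}{\delta_h}=\sum_{\K\in\Th}\aKh{\uI-\up}{\delta_h}+\sum_{\K\in\Th}\aK{\up-u}{\delta_h}+\sum_{\K\in\Th}\aK{u}{\delta_h}.
\]
Identity \eqref{eq:a:nonc0} rewrites the last sum as $\dual{f}{\delta_h}+\mathcal{N}_h(u,\delta_h)$. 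Regrouping $\dual{\fh}{\delta_h}-\dual{f}{\delta_h}$ into the forcing discrepancy gives
\[
  \alpha_{\ast}|\delta_h|_{1,h}^{2}\le -\dual{f-\fh}{\delta_h}-\sum_{\K\in\Th}\aKh{\uI-\up}{\delta_h}-\sum_{\K\in\Th}\aK{\up-u}{\delta_h}-\mathcal{N}_h(u,\delta_h).
\]

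To finish the abstract bound I would apply Cauchy--Schwarz together with the continuity of $\aK{\cdot}{\cdot}$ and of $\aKh{\cdot}{\cdot}$ (constant $\alpha^{\ast}$ from \eqref{eq:cont0}); the two consistency-type sums are then controlled via $|\uI-\up|_{1,h}\le|u-\uI|_{1,h}+|u-\up|_{1,h}$, while the forcing and nonconformity terms are absorbed into the two supremum terms of \eqref{eq:estima:h1}. Dividing by $|\delta_h|_{1,h}$ and using $|u-\uh|_{1,h}\le|u-\uI|_{1,h}+|\delta_h|_{1,h}$ yields \eqref{eq:estima:h1} with $C$ depending only on $\alpha_{\ast},\alpha^{\ast}$. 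For the rate \eqref{eq:estima44}, I would take $\uI$ as the interpolant \eqref{interp:0} and $\up=\wp$ from \eqref{eq:approx:pi}: the interpolation estimate \eqref{eq:interp:00} and the polynomial estimate \eqref{eq:approx:pi}, summed over elements, each give $Ch^{\min(k,s)}\|u\|_{s+1,\O}$ for $u\in H^{s+1}(\O)$; the forcing term is bounded by the right-hand side estimates of Section~\ref{sec:2} (order $Ch^{\min(k,s)}\|f\|_{s-1,\O}$, the $k=1$ contribution being of the same order since $\min(1,s)=1$); and the nonconformity term is bounded by Lemma~\ref{le:Nh}, giving $Ch^{\min(s,k)}\|u\|_{s+1,\O}$. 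Collecting the four contributions yields \eqref{eq:estima44}.

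I expect the main obstacle to be the second stage: orchestrating the insertion of $\up$ so that the inconsistency of $\ah{\cdot}{\cdot}$ (through the projection $\PINK$) and the nonconformity of $\Vhk$ are handled simultaneously, since \eqref{eq:consistency} can be used only after the polynomial $\up$ has been introduced, and the boundary contributions produced by integration by parts in \eqref{eq:a:nonc0} must be matched against the definition of $\Hnc$ exactly as in Lemma~\ref{le:Nh}. The bookkeeping for the $k=1$ right-hand side, which tests against $\tilde v_h$ rather than $v_h$, is a minor but genuine subtlety that the supremum term in \eqref{eq:estima:h1} is designed to absorb.
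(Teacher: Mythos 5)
Your proposal is correct and follows essentially the same route as the paper's proof: Lax--Milgram combined with the broken Poincar\'e inequality \eqref{eq:poinc:02} for well-posedness, then the identical chain of identities for $\delta_h=u_h-u^I$ --- inserting $u_\pi$ elementwise, invoking the $\kk$-consistency \eqref{eq:consistency} and the identity \eqref{eq:a:nonc0} to isolate $\mathcal{N}_h(u,\delta_h)$ --- and finally Lemma \ref{le:Nh} together with \eqref{eq:approx:pi} and \eqref{eq:interp:00} for the rate \eqref{eq:estima44}. Your closing observation about the $k=1$ right-hand side being tested against $\tilde v_h$ is a genuine subtlety that the paper leaves implicit in the supremum term, but it does not alter the argument.
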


\begin{proof}
We first establish the existence and uniqueness of the solution to \eqref{eq:var0h}. 
From \eqref{eq:cont0}, \eqref{eq:stability} and \eqref{eq:norm-broken} we easily have coercivity and continuity of the global discrete bilinear form in ${H}^{1,nc}(\Th;k)$ (and in particular in $V_h^{k} \subset {H}^{1,nc}(\Th;k)$),
\begin{equation}\label{eq:stab:cont:global}
\begin{aligned}
a_h(v,v) &\geq  \alpha_{\ast} a(v,v)\geq C_{s} \alpha_{\ast} |v|_{1,h}^{2}\quad &&\forall\, v \in \Hnc, \\
|a_h(u,v) |&\leq \alpha^{\ast}  |u|_{1,h}|v|_{1,h} \quad &&\forall\, u,v \in \Hnc.
\end{aligned}
\end{equation}
 With $f_h \in (V_h^{k})^{'}$ and the Poincar\`e inequality \eqref{eq:poinc:02},  a direct application of Lax-Milgram theorem guarantees existence and uniqueness of the solution $u_h\in V_h^{k}$ of \eqref{eq:var0h}.\\

We now prove the error estimate. We first write $u-u_h=(u-\ui)+(\ui-u_h)$ and use triangle inequality to bound
\begin{equation}\label{eq:error-2}
|u-u_h|_{1,h} \leq |u-\ui |_{1,h}+|u_h-\ui|_{1,h}\;.
\end{equation}
The first term can be estimated   using the standard approximation \eqref{eq:interp:00} and so it is enough to estimate the second term on the right hand side above.
Let $\delta_h=u_h-\ui \in V_h^{k}$. Using the continuity \eqref{eq:cont0} and the $k$-consistency several times
\begin{align}\label{eq:04}
\alpha_{\ast} |\delta_h|^{2}_{1,h}  &=\alpha_{\ast} a(\delta_h,\delta_h) \leq a_h(\delta_h,\delta_h) &&\nonumber\\
&=a_h(u_h,\delta_h)-a_h(\ui,\delta_h) &&\nonumber\\
&=(f_h,\delta_h) -\sum_{\K\in \Th} a_h^{\K}(\ui-u_{\pi},\delta_h) -\sum_{\K\in \Th} a_h^{\K}(u_{\pi},\delta_h) &&\nonumber\\
&=(f_h,\delta_h) -\sum_{\K\in \Th} a_h^{\K}(\ui-u_{\pi},\delta_h) -\sum_{\K\in \Th} a^{\K}(u_{\pi},\delta_h) &&\nonumber\\
&=(f_h,\delta_h) -\sum_{\K\in \Th} a_h^{\K}(\ui-u_{\pi},\delta_h) +\sum_{\K\in \Th} a^{\K}(u-u_{\pi},\delta_h) -a(u,\delta_h) &&\nonumber\\
&=(f_h,\delta_h) -a(u,\delta_h)  -\sum_{\K\in \Th} a_h^{\K}(\ui-u_{\pi},\delta_h) +\sum_{\K\in \Th} a^{\K}(u-u_{\pi},\delta_h) &&\nonumber\\
&=(f_h,\delta_h) -(f,\delta_h) -\mathcal{N}_h(u,\delta_h)  -\sum_{\K\in \Th} a_h^{\K}(\ui-u_{\pi},\delta_h) &&\nonumber\\
&\phantom{=}
+\sum_{\K\in \Th} a^{\K}(u-u_{\pi},\delta_h)
\end{align}
where in the last step we have used \eqref{eq:a:nonc0} to introduce the {\it consistency error}. The proof is then concluded by estimating each of the terms in the right hand side above and substituting in \eqref{eq:error-2}.  Last part of the theorem, follows by using Lemma \ref{le:Nh} and the approximation estimates  \eqref{eq:approx:pi} and \eqref{eq:interp:00}  to bound  the terms on the right hand side of  \eqref{eq:estima:h1}.
\end{proof}

\begin{remark}
Theorem \ref{teo0} is the corresponding abstract result to \cite[Theorem 3.1]{volley}. 
As commented before, the term $\calN_h$ measures the extent to which the continuous solution $u$ fails to satisfy the {\it virtual element} formulation \eqref{eq:var0h}; measures the non-conformity of the approximation. In this respect, this result could be regarded as the analog for the VEM of the Strang Lemma for classical FEM.
\end{remark}

\subsection{$L^{2}(\O)$-error analysis}

We now report the $L^{2}$ error analysis of the proposed nonconforming VEM. It follows closely the  $L^{2}$-error analysis for classical nonconforming methods.

\begin{teo}\label{teo:02}
Let $\O$ be a convex domain and let $\Th$ be a family of partitions of $\Omega$ satisfying \ASSUM{A}{1}-\ASSUM{A}{3}. Let $k\ge1$ and let $u\in H^{s+1}(\O),\,\, s\geq 1$ be the solution of \eqref{eq:var00} and let $u_h\in V_h^{k}$ be its nonconforming virtual element approximation solving \eqref{eq:var0h}.  Then, there exists a positive constant $C$ depending on $k$, the regularity of the mesh and the shape of the domain such that
\begin{align}
  \|u-u_h\|_{0,\Th} 
  &\leq C h  (  |u-u_h |_{1,h}+|u-u_{\pi}\|_{1,h})+C(h^{2} +h^{\min{(2,\bar{k}+1)}} )\|f-f_h\|_{0,\O}\nonumber\\
  &\phantom{\leq} +C  h^{\min{(k,s)}+1}\|u\|_{s+1,\O}\;.
\end{align}
where $\bar{k}=\max\{k-2,0\}$.
\end{teo}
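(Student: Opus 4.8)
The plan is to run an Aubin--Nitsche duality argument, taking care of the three error sources that are absent in the conforming case: the nonconformity (encoded by $\mathcal{N}_h$), the inconsistency of $a_h$ with respect to $a$, and the inconsistency of $f_h$ with respect to $f$. Write $\zeta:=u-u_h$, and note $\zeta\in H^{1,\mathrm{nc}}(\Th;k)$ because $u\in H^{s+1}(\O)\subset H^1(\O)$ is conforming while $u_h\in V_h^k$. Since $\O$ is convex, the dual problem $-\Delta\varphi=\zeta$ in $\O$, $\varphi=0$ on $\partial\O$, has a unique solution $\varphi\in H^2(\O)\cap H^1_0(\O)$ with $\|\varphi\|_{2,\O}\le C\|\zeta\|_{0,\O}$. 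Multiplying by $\zeta$ and integrating by parts element by element exactly as in \eqref{eq:a:nonc0} gives
\begin{equation*}
  \|\zeta\|_{0,\O}^2=\sum_{\K\in\Th}\aK{\zeta}{\varphi}-\mathcal{N}_h(\varphi,\zeta),
\end{equation*}
with $\mathcal{N}_h(\varphi,\zeta)=\sum_{e\in\Eh}\int_e\nabla\varphi\cdot\jump{\zeta}\dS$ as in \eqref{eq:def:Nh}.

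Next I insert the interpolant $\varphi_I\in V_h^k$ of $\varphi$ (obeying \eqref{eq:interp:00}) and a piecewise polynomial $\varphi_\pi\in\Pp^k(\Th)$ (obeying \eqref{eq:approx:pi}). Splitting $\sum_\K\aK{\zeta}{\varphi}=\sum_\K\aK{\zeta}{\varphi-\varphi_I}+\a{u}{\varphi_I}-\a{u_h}{\varphi_I}$, then using \eqref{eq:a:nonc0} in the form $\a{u}{\varphi_I}=\langle f,\varphi_I\rangle+\mathcal{N}_h(u,\varphi_I)$ and the discrete problem \eqref{eq:var0h} in the form $\ah{u_h}{\varphi_I}=\langle f_h,\varphi_I\rangle$, I reach the decomposition
\begin{align*}
  \|\zeta\|_{0,\O}^2
  &=\sum_{\K\in\Th}\aK{\zeta}{\varphi-\varphi_I}-\mathcal{N}_h(\varphi,\zeta)+\langle f-f_h,\varphi_I\rangle \\
  &\quad+\mathcal{N}_h(u,\varphi_I)-\big(\a{u_h}{\varphi_I}-\ah{u_h}{\varphi_I}\big).
\end{align*}
The whole proof reduces to estimating these five contributions and dividing by $\|\zeta\|_{0,\O}$.

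Four of the five terms are routine. The consistency term $\sum_\K\aK{\zeta}{\varphi-\varphi_I}$ is controlled by $|\zeta|_{1,h}|\varphi-\varphi_I|_{1,h}\le Ch|\zeta|_{1,h}\|\varphi\|_{2,\O}$ using \eqref{eq:interp:00} with $s=2$, and $\mathcal{N}_h(\varphi,\zeta)$ by Lemma~\ref{le:Nh} (with $\varphi\in H^2$, so exponent $\min(1,k)=1$), giving $Ch\|\varphi\|_{2,\O}|\zeta|_{1,h}$; after $\|\varphi\|_{2,\O}\le C\|\zeta\|_{0,\O}$ both feed the first claimed term. For the right-hand side I split $\langle f-f_h,\varphi_I\rangle=\langle f-f_h,\varphi_I-\varphi\rangle+\langle f-f_h,\varphi\rangle$; the first is $\le Ch^2\|f-f_h\|_{0,\O}\|\varphi\|_{2,\O}$ by the $L^2$ interpolation bound, while in the second I use the $L^2$-orthogonality built into \eqref{eq:def-f}, subtracting from $\varphi$ its $L^2$-projection of degree $\min(k-2,1)$, to get $\le Ch^{\min(2,\bar{k}+1)}\|f-f_h\|_{0,\O}\|\varphi\|_{2,\O}$ (for $k=1$ the quadrature $\tilde v_h$ contributes the same order, as already bounded in Section~\ref{subsec:fh:construction}). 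Finally, for the bilinear-form inconsistency $\a{u_h}{\varphi_I}-\ah{u_h}{\varphi_I}$ I insert $\up\in\Pp^k(\Th)$ and $\varphi_\pi$ and invoke the $\kk$-consistency \eqref{eq:consistency} in each argument to reduce it to $\sum_\K\big(\aK{u_h-\up}{\varphi_I-\varphi_\pi}-\aKh{u_h-\up}{\varphi_I-\varphi_\pi}\big)$; continuity \eqref{eq:cont0} and stability \eqref{eq:stability} then bound it by $C|u_h-\up|_{1,h}|\varphi_I-\varphi_\pi|_{1,h}\le Ch\big(|u-u_h|_{1,h}+|u-\up|_{1,h}\big)\|\varphi\|_{2,\O}$. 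These assemble into the two groups $Ch(|u-u_h|_{1,h}+|u-u_\pi|_{1,h})$ and $C(h^2+h^{\min(2,\bar{k}+1)})\|f-f_h\|_{0,\O}$.

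The one delicate step, and the main obstacle, is the nonconformity term $\mathcal{N}_h(u,\varphi_I)$: a direct use of Lemma~\ref{le:Nh} would only give $h^{\min(s,k)}$, one power short of the claimed $h^{\min(k,s)+1}$, so the extra power must be extracted from the $H^2$-regularity of $\varphi$. Since $\varphi\in H^1_0(\O)$ is conforming, $\mathcal{N}_h(u,\varphi)=0$, and hence with $w:=\varphi_I-\varphi\in H^{1,\mathrm{nc}}(\Th;k)$ I may write $\mathcal{N}_h(u,\varphi_I)=\sum_{e\in\Eh}\int_e\nabla u\cdot\jump{w}\dS$. On each face $\jump{w}$ is normal, so $\nabla u\cdot\jump{w}=(\nabla u\cdot\n_e)(\jump{w}\cdot\n_e)$, and the defining condition \eqref{eq:h1:02b} gives $\int_e(\jump{w}\cdot\n_e)\,q\dS=0$ for all $q\in\Pp^{k-1}(e)$; subtracting $\mathcal{P}_e^{k-1}(\nabla u\cdot\n_e)$ and applying Cauchy--Schwarz yields
\begin{equation*}
  |\mathcal{N}_h(u,\varphi_I)|\le\sum_{e\in\Eh}\big\|\nabla u\cdot\n_e-\mathcal{P}_e^{k-1}(\nabla u\cdot\n_e)\big\|_{0,e}\,\big\|\jump{w}\cdot\n_e\big\|_{0,e}.
\end{equation*}
The first factor is $O\!\big(h^{\min(s,k)-1/2}\|u\|_{s+1,\K^{\pm}}\big)$ exactly as in Lemma~\ref{le:Nh}; the crucial gain is that the second factor is now $O\!\big(h^{3/2}\|\varphi\|_{2,\K^{\pm}}\big)$ instead of $O(h^{1/2})$, by the scaled trace inequality combined with $\|w\|_{0,\K}\le Ch^2\|\varphi\|_{2,\K}$ and $|w|_{1,\K}\le Ch\|\varphi\|_{2,\K}$. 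Summing over faces and using $\|\varphi\|_{2,\O}\le C\|\zeta\|_{0,\O}$ gives $|\mathcal{N}_h(u,\varphi_I)|\le Ch^{\min(k,s)+1}\|u\|_{s+1,\O}\|\zeta\|_{0,\O}$, which is the last claimed term. Collecting the five bounds and dividing by $\|\zeta\|_{0,\O}=\|u-u_h\|_{0,\O}$ finishes the proof.
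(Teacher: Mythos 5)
Your proof is correct and is essentially the paper's: the same Aubin--Nitsche duality argument with the same five-term decomposition, the same treatment of the $f-f_h$ term and of the $a$ versus $a_h$ inconsistency via $u_\pi,\varphi_\pi$ and $k$-consistency, and the same key device $\mathcal{N}_h(u,\varphi)=0$ so that $\mathcal{N}_h(u,\varphi_I)=\mathcal{N}_h(u,\varphi_I-\varphi)$ supplies the extra power of $h$ --- your scaled-trace-inequality estimate of this term is just the proof of Lemma~\ref{le:Nh} rerun with $v=\varphi_I-\varphi$, which the paper obtains by citing that lemma directly. One cosmetic slip: the projection degree in the bound for $\langle f-f_h,\varphi\rangle$ should be $\bar{k}=\max\{k-2,0\}$ (the paper's $\mathcal{P}^{\bar{k}}_{\K}$), not $\min(k-2,1)$, since for $k=1$ it is orthogonality to constants that supplies the factor $h^{\min(2,\bar{k}+1)}=h$.
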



\begin{proof}
We consider the dual problem: find $\psi \in H^{2}(\O)\cap H^{1}_{0}(\O) $ solution of
\begin{equation*}
-\Delta\psi =u-u_h \quad \mbox{   in  } \O, \qquad \psi=0 \quad \mbox{ on  } \partial\Omega\;.
\end{equation*}
From the assumptions on the domain, the elliptic regularity theory gives the inequality $
\|\psi\|_{2,\Omega} \leq C \|u-u_h\|_{0,\Omega}$ 
where $C$ depends on the domain only through the domain's shape.  Let $\psi^{I}\in V_h^{k}$ and $\psi_{\pi}\in \mathbb{P}^{k}(\Th)$ be the approximations to $\psi$ 
satisfying 
\eqref{eq:interp:00} and \eqref{eq:approx:pi}.Then, integrating by parts we find
\begin{align}\label{eq:inter:04}
\|u-u_h\|_{0,\Th}^{2} &= \int_{\O} -\Delta\psi (u-u_h) d\bx &&\nonumber\\
&= \sum_{\K\in \Th} \int_{\K}\nabla \psi \cdot \nabla (u-u_h) d\bx +\sum_{\K\in \Th} \int_{\partial \K} \frac{\partial\psi}{\partial \n} \,(u-u_h) \dS &&\nonumber\\
&=a( \psi-\psi^{I}, (u-u_h) ) + a( \psi^{I}, (u-u_h) )  +\calN_{h}(\psi, u-u_h)\;.
\end{align}
We now estimate the three terms above. 
The estimate for the first one follows from the continuity of $a(\cdot,\cdot)$ together with the approximation properties \eqref{eq:interp:00} of $\psi^{I}$ and the a-priori estimate of $\psi$
\begin{equation*}
|a( \psi-\psi^{I}, u-u_h )|\leq C| \psi-\psi^{I}|_{1,h} |u-u_h|_{1,h}\leq C h\|u-u_h\|_{0,\Th}  |u-u_h |_{1,h}\;.
\end{equation*}
Last term is readily estimated by means of Lemma \ref{le:Nh} with $k=s=1$ (since obviously $u-u_h\in H^{1,nc}(\Th;1)$), giving
\begin{equation*}
|\calN_{h}(\psi, u-u_h)| \leq Ch\|\psi\|_{2,\O} |u-u_h|_{1,h}\leq C h\|u-u_h\|_{0,\Th}  |u-u_h |_{1,h}\;.
\end{equation*}
To estimate the second term in \eqref{eq:inter:04} we use the symmetry of the problem together with  \eqref{eq:var0h} and \eqref{eq:a:nonc0} to write
\begin{align}
a( \psi^{I}, u-u_h )
&= a( u,\psi^{I})- a(u_h,\psi^{I} )  &&\nonumber\\
&=\calN_h(u,\psi^{I}) +\langle f,\psi_i\rangle - a(u_h,\psi^{I} )
+ a_h(u_h,\psi^{I} ) - a_h(u_h,\psi^{I} ) &&\nonumber\\
&=\calN_h(u,\psi^{I}) +\langle f -f_h,\psi^{I} \rangle +\big(a_h(u_h,\psi^{I})-a(u_h,\psi^{I} )\big)  &&\nonumber\\
&=T_0+T_1+T_2&&\label{eq:tes0}
\end{align}
To conclude we need to estimate each of the above terms. For the first one, we first notice that from the definition \eqref{eq:def:Nh} and the regularity of $\psi$, one obviously has $\calN_h(u,\psi)=0$. Hence, a standard application of Lemma \ref{le:Nh} together with the approximation properties \eqref{eq:interp:00} of $\psi^{I}$ and the a-priori estimate of $\psi$, gives
\begin{align}
|T_0|
&=|\calN_h(u,\psi^{I})|=|\calN_h(u,\psi^{I}-\psi)| \leq Ch^{\min{(k,s)}}\|u\|_{s+1,\O} |\psi^{I}-\psi|_{1,h} &&\nonumber\\
&\leq C h^{\min{(k,s)}+1}\|u\|_{s+1,\O}\|u-u_h\|_{0,\Th}\;.
\end{align}
The last two terms in \eqref{eq:tes0} can be bounded as in \cite{vem-elas}. Here, we report the proof for the sake of completeness. 
For $T_1$, using the $L^{2}$-orthogonal projection, and denoting again $\bar{k}=\max\{k-2,0\}$, we find
\begin{align}
T_1&= \sum_{\K\in\Th}\left( \int_{\K} (f-f_h) (\psi^{I}-\psi) dx + \int_{\K} (f-f_h) (\psi-\mathcal{P}^{\bar{k}}_{\K} (\psi)) dx\right) &&\nonumber\\
&\qquad \quad \leq \|f-f_h\|_{0,\O}(\|\psi^{I}-\psi\|_{0,\Th} +\|\psi -\mathcal{P}^{\bar{k}}_{\K} (\psi)\|_{0,\Th}) &&\nonumber\\
&\qquad \quad \leq C(h^{2} +h^{\min{(2,\bar{k}+1)}} )\|f-f_h\|_{0,\O}\|u-u_h\|_{0,\O}\;. &&
\end{align}

As regards $T_2$, we use the symmetry together with the $k$-consistency property twice, and the definition of the norm \eqref{eq:norm-broken}
\begin{align*}
T_{2} &=a_h(u_h,\psi^{I})-a(u_h,\psi^{I} ) =\sum_{\K\in \Th} \big(a^{\K}_h(u_h-u_{\pi},\psi^{I})-a^{\K}(u_h-u_{\pi},\psi^{I} )\big)  &&\nonumber\\
&=\sum_{\K\in \Th} \big(a^{\K}_h(u_h-u_{\pi},\psi^{I}-\psi_{\pi})-a^{\K}(u_h-u_{\pi},\psi^{I}-\psi_{\pi} )\big) && \nonumber\\
&\leq | u_h-u_{\pi}|_{1,h} |\psi_h-\psi_{\pi}|_{1,h}\;.
\end{align*}
Each of the above terms can be readily estimated by  adding and subtracting $u$ and $\psi$:
\begin{align*}
 | u_h-u_{\pi}|_{1,h}^{2} &\leq \sum_{\K\in \Th}  \left(\|\nabla(u_h-u)\|_{0,\K}^{2} +\| \nabla(u-u_{\pi})|_{0,\K}^{2} \right) &&\\
 |\psi_h-\psi_{\pi}|_{1,h}^{2}  &\leq \sum_{\K\in \Th} \left(\|\nabla(\psi_h-\psi)\|_{0,\K}^{2} +\| \nabla(\psi-\psi_{\pi})|_{0,\K}^{2} \right) \leq Ch^{2} \|u-u_h\|^{2}_{0,\Th}\;, &&
 \end{align*}
 where in the last step we have also used the standard approximation properties \eqref{eq:approx:pi} and \eqref{eq:interp:00}. With the above estimates, the bound for the term $T_2$ finally reads
 \begin{equation*}
 T_2 \leq Ch \|u-u_h\|_{0,\Th} (|u-u_h|_{1,h}+|u-u_{\pi}|_{1,h})
 \end{equation*}
Plugging now the estimates for $T_0,T_1$ and $T_2$ into \eqref{eq:tes0} we finally get:
\begin{align*}
\|u-u_h\|_{0,\Th}
&\leq C h (  h^{\min{(k,s)}+1}\|u\|_{s+1,\O}+ |u-u_h |_{1,h}+|u-u_{\pi}|_{1,h})+C(h^{2} \\
&\phantom{\leq} +h^{\min{(2,\bar{k}+1)}} )\|f-f_h\|_{0,\O}\;,
\end{align*}
which concludes the proof.

\end{proof}

\section{Connection with the nonconforming MFD \\method \cite{Lipnikov-Manzini:2014:JCP}}
\label{sec:4}
In this section, we discuss relationships between the proposed
nonconforming VEM and the nonconforming MFD method in~\cite{Lipnikov-Manzini:2014:JCP}.
Throughout this section we will use the notation of~\cite{BeiraodaVeiga-Brezzi-Marini-Russo:2014}.
Also, we will omit the element index $\K$ from all matrix symbols.

The stiffness matrix $\matM^{\VEM{}}$ of the nonconforming VEM is formally defined as
\begin{align*}
  a^{\K}_{h}(u_h,v_h) = {\bf v}_h^T\matM^{\VEM{}}{\bf u}_h,
\end{align*}
where ${\bf v}_{h}$ and ${\bf u}_{h}$ are algebraic vectors collecting the degrees of
freedom of functions $v_h$ and $u_h$, respectively.
We enumerate the whole set of $\npoly$ scaled monomials used in~\eqref{eq:dofs:01}
and~\eqref{eq:dofs:02} to define the degrees of freedom by local indices
$i$ and $j$ (resp., $\mm_i$ and $\mm_j$) ranging from $1$ to $\npoly$.

To compute the stiffness matrix, we need two auxiliary matrices
$\matB$ and $\matD$.
The $j$-th column of matrix $\matB$, for $j=1,\ldots,n_\K$, is defined by
\begin{align}
  \matB_{1j} &= 
  \begin{cases}
    \displaystyle\int_{\partial\K}\psi_j \,\dS&=0 \qquad\textrm{if~}k=1,\\[1.em]
    \displaystyle\int_{\K}       \psi_j\dx&=0 \qquad\textrm{if~}k\ge 2,
  \end{cases}\\[1em]
  \matB_{ij} &= \int_{\K}\nabla\mm_i\cdot\nabla\psi_j\dx,
  \quad
  \,i=2,\ldots,\npoly.
  \label{eq:matB}
\end{align}
The $j$-th column of matrix $\matD$, for
$j=1,\ldots,N_{\K}$, collects the degrees of freedom of the $j$-th
monomials and is defined by:
\begin{align}
  \matD_{ij} = \chi_i(\mm_j),\qquad i=1,\ldots,n_\K.
  \label{eq:matD}
\end{align}

Now, we consider the matrices $\matG=\matB\matD$,
$\bPIN=\matD\matG^{-1}\matB$ and $\matGt$, which is obtained from
matrix $\matG$ by setting its first row to zero.
The VEM stiffness matrix is the sum of two matrices, 
$\matM^{\VEM{}} = \matM^{\VEM{}}_{0} + \matM^{\VEM{}}_{1}$, which are
defined by the following formula:
\begin{align}
  \matM^{\VEM{}}
  = (\matG^{-1}\matB)^T\matGt(\matG^{-1}\matB) + (\matI-\bPIN)^T\matS(\matI-\bPIN),
  \label{eq:VEM:stiffness:matrix}
\end{align}
where $\matI$ is the identity matrix and $\matS$ is the matrix
representation of the bilinear form $S^{\K}$.
The first matrix term corresponds to the consistency property and the
second term ensures stability.
According to \eqref{eq:def:gamma}, we can set
\begin{align}
  \matS = h^{d-2}\matI.
\end{align}
Since the choice of $S^{\K}$ is not unique, so is the choice of $\matS$; 
therefore, we have a family of virtual element schemes that differ
by matrix $\matS$.

The mimetic stiffness matrix considered
in~\cite{Lipnikov-Manzini:2014:JCP} has the same structure, 
$\matM^{\MFD{}} = \matM^{\MFD{}}_{0} + \matM^{\MFD{}}_{1}$, and the
two matrices $\matM^{\MFD{}}_{0}$ and $\matM^{\MFD{}}_{1}$ are also
related to the consistency and stability properties.
In particular, matrix $\matM^{\MFD{}}_{1}$ is given by:
\begin{align}
  \matM^{\MFD{}}_{1} = \big(\matI-\bPIp\big)\matU\big(\matI-\bPIp\big),
  \label{eq:MFD:stab:matrix}
\end{align}
where $\bPIp=\matD(\matD^T\matD)^{-1}\matD^T$ is the orthogonal
projector on the linear space spanned by the columns of matrix $\matD$
and $\matU$ is a symmetric and positive definite matrix of parameters.

Since both the VEM and the MFD method use the same degrees of freedom,
they must satisfy the same conditions of consistency and
stability.
Moreover, the matrices $\matM^{\MFD{}}_{0}$ and $\matM^{\VEM{}}_{0}$
are uniquely determined by the consistency condition (the exactness property 
on the same set of polynomials of degree $k$); thus, they must coincide.
Consequently, the virtual and mimetic stiffness matrices may differ
only for the stabilization terms $\matM^{\VEM{}}_{1}$ and
$\matM^{\MFD{}}_1$.
The relation between $\matM^{\VEM{}}_{1}$ and $\matM^{\MFD{}}_1$ is
established by the following lemma.
\begin{lemma}
  \label{lemma:MFD-vs-VEM}
  \begin{description}
  \item[$(i)$] For any mimetic stabilization matrix of the
    form~\eqref{eq:MFD:stab:matrix}, we can find a matrix $\matS$ such
    that $\matM^{\VEM{}}_{1}$ and $\matM^{\MFD{}}_{1}$ coincide.
    \medskip
  \item[$(ii)$] For any virtual element stabilization matrix as the second
    term in the right-hand-side of~\eqref{eq:VEM:stiffness:matrix}, we can
    find a matrix $\matU$ such that $\matM^{\MFD{}}_{1}$ and
    $\matM^{\VEM{}}_{1}$ coincide.
  \end{description}
\end{lemma}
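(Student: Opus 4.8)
The plan is to reduce everything to one structural fact: both the oblique projector $\bPIN=\matD\matG^{-1}\matB$ (with $\matG=\matB\matD$) and the orthogonal projector $\bPIp=\matD(\matD^{T}\matD)^{-1}\matD^{T}$ are idempotent with the \emph{same} range, namely the column space of $\matD$ (the polynomial subspace singled out by the degrees of freedom). Indeed $\bPIN\matD=\matD\matG^{-1}\matB\matD=\matD$ and $\bPIp\matD=\matD$, so each acts as the identity on $\mathrm{range}(\matD)$; since each image lies in $\mathrm{range}(\matD)$, this yields $\bPIN\bPIp=\bPIp$ and $\bPIp\bPIN=\bPIN$. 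Writing $Q=\matI-\bPIN$ and $R=\matI-\bPIp$ (so $\matM^{\VEM{}}_{1}=Q^{T}\matS Q$, and since $\bPIp$ is symmetric $R^{T}=R$ and $\matM^{\MFD{}}_{1}=R\matU R$), the two products translate into the four identities $QR=Q$, $RQ=R$, $Q^{T}R=R$, $RQ^{T}=Q^{T}$, which are the only algebraic tools the argument needs.

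For part $(ii)$, given $\matS$ I would first try $\matU_{0}:=Q^{T}\matS Q$; then $R\matU_{0}R=(RQ^{T})\matS(QR)=Q^{T}\matS Q=\matM^{\VEM{}}_{1}$ using $RQ^{T}=Q^{T}$ and $QR=Q$, so $\matU_{0}$ already solves the matrix equation. Symmetrically, for part $(i)$, given $\matU$ I would take $\matS_{0}:=R\matU R$ and check $Q^{T}\matS_{0}Q=(Q^{T}R)\matU(RQ)=R\matU R=\matM^{\MFD{}}_{1}$. The snag is that $\matU_{0}$ and $\matS_{0}$ are singular (they vanish on $\mathrm{range}(\matD)^{\perp}$ and on $\mathrm{range}(\matD)$ respectively), whereas a valid stabilization matrix must be symmetric positive definite. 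I would fix this by adding a term living on the complementary subspace that is killed by the outer projectors: set $\matU:=Q^{T}\matS Q+\bPIp$ in $(ii)$ and $\matS:=R\matU R+\bPIN^{T}\bPIN$ in $(i)$. Because $R\bPIp=(\matI-\bPIp)\bPIp=0$ and $\bPIN Q=\bPIN(\matI-\bPIN)=0$, these corrections drop out of $R(\cdot)R$ and $Q^{T}(\cdot)Q$, so the matrix identities $R\matU R=\matM^{\VEM{}}_{1}$ and $Q^{T}\matS Q=\matM^{\MFD{}}_{1}$ are preserved.

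It then remains to verify definiteness. In $(ii)$, $x^{T}\matU x=(Qx)^{T}\matS(Qx)+\|\bPIp x\|^{2}$ vanishes only if $Qx=0$ and $\bPIp x=0$, i.e. if $x\in\mathrm{range}(\matD)$ and $x\perp\mathrm{range}(\matD)$, forcing $x=0$; case $(i)$ is identical, using that $\bPIN x=x$ on $\mathrm{range}(\matD)$ so that $\mathrm{range}(\matD)\cap\ker\bPIN=\{0\}$. The main obstacle here is conceptual rather than computational: one must notice that the oblique VEM projector and the orthogonal MFD projector share a common range, which collapses the whole statement to the elementary product rules for projectors onto a common subspace. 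Once that is seen, the only care needed is the positive-definiteness bookkeeping, which the explicit correction terms $\bPIp$ and $\bPIN^{T}\bPIN$ make transparent; the equality of the consistency parts $\matM^{\VEM{}}_{0}=\matM^{\MFD{}}_{0}$ has already been argued, so nothing further is required.
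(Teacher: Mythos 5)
Your proposal is correct and takes essentially the same route as the paper: the paper also reduces everything to the projector identities $\bPIN\bPIp=\bPIp$ and $\bPIp\bPIN=\bPIN$ (which you derive cleanly from the common range $\mathrm{range}(\matD)$, where the paper merely invokes ``a straightforward calculation'') and then chooses $\matS=\matM^{\MFD{}}_{1}$ for $(i)$ and $\matU=\matM^{\VEM{}}_{1}$ for $(ii)$, exactly your $\matS_{0}$ and $\matU_{0}$. Your additional correction terms $\bPIp$ and $\bPIN^{T}\bPIN$ restoring positive definiteness are verified correctly and tidy up a point the paper passes over in silence, since its own choices are only positive semidefinite.
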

\begin{proof}
  $(i)$ A straightforward calculation shows that
  \begin{align}
    \label{eq:rel:1}
    \bPIN\bPIp=\bPIp,
    \quad \big(\bPIN\big)^T\bPIp=\big(\bPIN\big)^T, 
    \quad \bPIp\bPIN=\bPIN.
  \end{align}
  We take $\matS=\matM^{\MFD{}}_{1}$. Using \eqref{eq:rel:1} yields:
  \begin{align*}
    \matM^{\VEM{}}_1 
    = \big(\matI-\bPIN\big)^T\,\big(\matI-\bPIp\big)\matU\big(\matI-\bPIp\big)\,(\matI-\bPIN) 
    = \matM^{\MFD{}}_1.
  \end{align*}

  \medskip\noindent
  $(ii)$ The relations in~\eqref{eq:rel:1} imply that
  $\big(\matI-\bPIp\big)\,\big(\matI-\bPIN\big)^T = \big(\matI-\bPIN\big)^T$.
  The assertion of the lemma follows by taking
  \begin{align*}
    \matU = (\matI-\bPIN)^T\matS(\matI-\bPIN) = \matM^{\VEM{}}_{1}.
  \end{align*}
\end{proof}

\medskip
\begin{remark}
  An effective and practical choice in the mimetic technology 
  (see \cite{Lipnikov-Manzini:2014:JCP}) is
  provided by taking $\matU=\rho\matI$ where $\rho$ is a
  scaling factor defined as the mean trace of $\matM^{\MFD{}}_0$.
  This implies that $\matM^{\MFD{}}_{1} = \rho\big(\matI - \bPIp\big)$.
\end{remark}

\section{Conclusions}
\label{sec:5}

In this work, we introduced the non-conforming virtual element method (VEM) for an
elliptic equation.
The VEM allows us to built arbitrary order schemes
on shape-regular polygonal and polyhedral meshes that may include
non-convex and degenerate elements.
In contrast to the  classical non-conforming finite element methods,
the construction of the VEM is done at once for any degree
$k \ge 1$ and any element shape.
Another advantage of the virtual element framework is ability to carry
out theoretical analysis for complex meshes reusing many existing
functional analysis tools.
We have shown the optimal convergence estimates in the energy and $L^2$ norms.
We also established an algebraic equivalence of the VEM and the mimetic
finite difference method from \cite{Lipnikov-Manzini:2014:JCP}.

 \section*{Aknowledgements}

The first author is in-debt with Franco Brezzi and Donatella Marini
from Pavia, for the multiple and fruitful discussions and specially
for the encouragement  to carry out  this work. She also thanks the
IMATI-CNR at Pavia, where most part of this work was done, while she
was visiting in December 2013.
The work of the second author was partially supported by the DOE
Office of Science Advanced Scientific Computing Research (ASCR)
Program in Applied Mathematics Research.
The work of the third author was partially supported by the LANL
LDRD-ER Project \#20140270 and by IMATI-CNR, Pavia, Italy.


\end{document}